\newcommand{\sss}{\vspace{2 mm}\noindent}
\newcommand\Tstrut{\rule{0pt}{2.6ex}}         
\newcommand\Bstrut{\rule[-1ex]{0pt}{0pt}}   
\newcommand{\ds}{\displaystyle}
\newcommand{\inv}{^{-1}}
\newcommand{\cal}{\mathcal}
\newcommand{\bbf}{\mathbb{F}}
\newcommand{\bbq}{\mathbb{Q}}
\newcommand{\bbz}{\mathbb{Z}}
\newcommand{\bbp}{\mathbb{P}}
\newcommand{\disc}{\operatorname{disc}}
\newcommand{\Gal}{\operatorname{Gal}}
\newcommand{\ord}{\operatorname{ord}}
\newcommand{\pper}{\operatorname{pper}}
\newcommand{\per}{\operatorname{per}}
\newcommand{\weight}{\operatorname{weight}}
\newcommand{\Wr}{\operatorname{Wr}}
\newtheorem{thm}{Theorem}[section]
\newtheorem{prop}[thm]{Proposition}
\newtheorem{cor}[thm]{Corollary}
\newtheorem{lem}[thm]{Lemma}
\theoremstyle{definition}
\newtheorem{defn}[thm]{Definition}
\newtheorem{example}[thm]{Example}
\theoremstyle{remark}
\newtheorem{remark}[thm]{Remark}
\begin{document}
\title{Chebyshev action on finite fields}
\author[t. alden gassert]{T. Alden Gassert}
\email{gassert@math.umass.edu}
\address{Department of Mathematics and Statistics, University of Massachusetts, Amherst, 710 N. Pleasant Street, Amherst, MA, USA 01003}

\date{September 19, 2012}

\begin{abstract}
Given a polynomial $\phi(x)$ and a finite field $\bbf_q$ one can construct a directed graph where the vertices are the values in the finite field, and emanating from each vertex is an edge joining the vertex to its image under $\phi$. When $\phi$ is a Chebyshev polynomial of prime degree, the graphs display an unusual degree of symmetry. In this paper we provide a complete description of these graphs, and then use these graphs to determine the decomposition of primes in the Chebyshev radical extensions.
\end{abstract}

\maketitle

\section{\large Introduction} 
Let $K$ be a number field and $\phi$ be a monic polynomial of degree at least 2 with coefficients in $\cal O_K$, the ring of integers of $K$. We denote the $n$-fold iterate of $\phi$ by $\phi^n(x) = \phi(\phi^{n-1}(x))$, where $\phi^0(x) := x$. For a fixed $t \in \cal O_K$, if $\phi^n(x)-t$ is irreducible for $n \ge 1$, one can obtain, very naturally, a tower of fields over $K$ in the following way. Let $\{\theta_0 = t, \theta_1, \theta_2, \ldots \}$ be a compatible sequence of preimages of $t$ satisfying $\phi(\theta_n) = \theta_{n-1}$ (and hence $\phi^n(\theta_n)-t = 0$), then we obtain a nested sequence of fields
\begin{align*}
K = K_0 \subset K_1 \subset K_2 \subset \cdots,
\end{align*}
where $K_n := K(\theta_n)$ and $[K_n \colon K] = (\deg\phi)^n$.

In this paper, we give the decomposition of prime ideals in the towers obtained when $K = \bbq$ and $\phi = T_\ell$ is a Chebyshev polynomial of the first kind of prime degree $\ell$. The number fields arising from this construction are the \emph{Chebyshev radical extensions}, and from now on, we use $K_n$ to refer to such an extension of degree $\ell^n$ over $\bbq$. 

In general, for each $d \ge 0$, $T_d \in \bbz[x]$ is the monic, degree-$d$ polynomial defined by 
\begin{align*}
T_d(z+z\inv) = z^d + z^{-d},
\end{align*}
or equivalently, $T_d(2 \cos\theta) = 2\cos(d\theta)$. These polynomials satisfy a multitude of relations (see \cite{LMT93}, Chapter 2, or \cite{Riv90}, Chapter 1), but from a dynamical standpoint, the most significant property is that these polynomials commute under composition: 
\begin{align*}
T_d \circ T_e = T_e \circ T_d = T_{de}.
\end{align*}
In particular, $T_\ell^n = T_{\ell^n}$, which provides an intimate access to each number field in the tower described above. We note that there are many values of $t\in \bbz$ for which $T_\ell^n(x)-t$ is irreducible for each $n \ge 1$. For example, if $\ell$ is an odd prime and $t$ is divisible by $\ell$ exactly once, then it can easily be shown that every iterate is Eisenstein at $\ell$. A broader result is stated in Theorem \ref{th:intro.1}(1) below.

\begin{figure}[!ht]
\centering
\begin{tikzpicture}[>=latex]
	\tikzstyle{every node} = [fill,circle,outer sep = 1mm,inner sep = 1mm]
	
	\foreach \x in {0,...,3} {
		\draw[->] node[Green] (0) at (\x*90:.5) {}
			node[Green] (ahead) at (\x*90 + 90:.5) {}
			(0) edge (ahead)
			node[Green] (1) at (\x*90:1.2) {}
			(1) edge (0);
		\foreach \y in {-1,1} {
			\draw[->] node[red] (2) at (\x*90+\y*22.5:1.7) {}
				(2) edge (1);
			\foreach \z in {-1,1} {
				\draw[->] node[red] (3) at (\x*90+\y*22.5+\z*11.25:2.4) {}
					(3) edge (2);
				\foreach \w in {-1,1} {
					\draw[->] node[blue] (4) at (\x*90+\y*22.5+\z*11.25+\w*5.625:3.1) {}
						(4) edge (3);
				}
			}
		}
	}
\end{tikzpicture}
\caption{A component of the graph of $T_2$ over the finite field of order $29^4$. The color of the vertex corresponds to the smallest field containing the element associated to the vertex: green -- $\bbf_{29}$; red -- $\bbf_{29^2}$; blue -- $\bbf_{29^4}$.} \label{fig:intro.1}
\end{figure}

We determine the decomposition of primes by studying the dynamics of a Chebyshev polynomial over a finite field, an approach proposed by Aitken, Hajir, and Maire \cite{AHM05}. For a general polynomial $\phi$, the dynamics of $\phi$ over $\bbf_q$, where $q$ is a prime power, can be captured in a directed graph. The graph is constructed as follows: each element $a \in \bbf_q$ corresponds to a vertex in the graph---which by abuse of notation we also call $a$---and the graph contains the directed edge $(a,b)$ if $\phi(a) = b$. In the case of the Chebyshev polynomials, the components of the graph are radially symmetric. (See Figure \ref{fig:intro.1}.) Moreover, the number of components and their structure can be determined completely. We use the standard definitions from arithmetic dynamics to describe this structure.

\begin{defn}
Let $S$ be a set and $\phi \colon S \to S$ a map. An element $a \in S$ is \emph{preperiodic} with respect to $\phi$, and we write $\pper_{\phi,S}(a) = \rho$, if there exist minimal integers $\rho \ge 0$ and $\pi \ge 1$ such that $\phi^{\rho+\pi}(a) = \phi^\rho(a)$. Moreover, if $\rho>0$, then $a$ is \emph{strictly preperiodic}. If $\rho=0$, then $a$ is \emph{periodic} with respect to $\phi$, and we write $\per_{\phi,S}(a) = \pi$.
\end{defn}

The predictable nature of the graph allows us to deduce reducibility results for $T_\ell^n(x)-t$. For the benefit of the reader, we provide a special case of the main result (Theorem \ref{th:main}). For a prime $p$, let 
\begin{align*}
v_p = \max_{a \in \bbf_p}\{\pper_{T_\ell,\bbf_p}(a)\},
\end{align*}
and let $\overline t$ denote the reduction of $t$ modulo $p$.

\begin{thm} \label{th:intro.1}
\
\begin{enumerate}
\item If $v_p > 0$ and $\pper_{T_\ell,\bbf_p}(\,\overline t\,) = v_p$, then every iterate $T_\ell^n(x)-t$ is irreducible modulo $p$, and thus irreducible in $\bbz[x]$.
\item If $v_p > 0$ and $n \le v_p - \pper_{T_\ell,\bbf_p}(\,\overline t\,)$, then $T_\ell^n(x)-t$ splits in $\bbf_p$.
\end{enumerate}
\end{thm}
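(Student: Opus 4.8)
First I would pass to $\overline{\bbf_p}$ and use the parametrization $x=w+w^{-1}$, under which $T_d$ becomes the power map $w\mapsto w^d$. Since $T_\ell^n=T_{\ell^n}$, the roots of $T_\ell^n(x)-t$ in $\overline{\bbf_p}$ are exactly the $\ell^n$ elements $w+w^{-1}$ with $w^{\ell^n}+w^{-\ell^n}=\overline t$; writing $\overline t=z+z^{-1}$ for some $z\in\overline{\bbf_p}^\times$, these are the $w+w^{-1}$ with $w^{\ell^n}=z$ (replacing $z$ by $z^{-1}$ gives the same set, since $w\mapsto w^{-1}$ fixes $w+w^{-1}$; the fibres over $\overline t\equiv\pm2$, where $z=\pm1$, are degenerate and I would handle them separately). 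I would then record the dictionary: $w+w^{-1}\in\bbf_{p^j}$ precisely when $\ord(w)\mid p^j-1$ or $\ord(w)\mid p^j+1$, and $\pper_{T_\ell,\bbf_p}(w+w^{-1})=v_\ell(\ord w)$, so that $v_p=\max\{v_\ell(p-1),v_\ell(p+1)\}$. Finally, writing $\ord(z)=\ell^{s}M$ with $\gcd(M,\ell)=1$ and $s=\pper_{T_\ell,\bbf_p}(\overline t)$, a short computation shows that for $s\ge1$ every solution of $w^{\ell^n}=z$ has order $\ell^{\,n+s}M$, while for $s=0$ the orders realized are $\ell^{a}M$ for $0\le a\le n$.

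For part (1), the hypothesis gives $s=v_p$, and if $\ell^{v_p}M\mid p-\epsilon$ ($\epsilon\in\{\pm1\}$) then maximality of $v_p$ forces $v_\ell(p-\epsilon)=v_p$ --- and for $\ell$ odd, $\ell\nmid p+\epsilon$. Every root now has order $\ell^{\,n+v_p}M$, and a lifting-the-exponent computation ($v_\ell(p^{j}-1)=v_\ell(p-1)+v_\ell(j)$ when $\ell$ is odd and $\ell\mid p-1$, together with the analogues for $p^j+1$ and for $\ell=2$) shows that the least $j$ with $\ell^{\,n+v_p}M\mid p^{j}-1$ or $\ell^{\,n+v_p}M\mid p^{j}+1$ equals $j=\ell^n$. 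So a root of $T_\ell^n(x)-t$ has degree exactly $\ell^n$ over $\bbf_p$; as the polynomial is monic of degree $\ell^n$ it must coincide with the minimal polynomial of that root, hence is irreducible (and automatically separable) over $\bbf_p$. Reducing a hypothetical integer factorization modulo $p$ then yields irreducibility in $\bbz[x]$.

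For part (2), with $n\le v_p-s$, I want every root $w+w^{-1}$ to lie in $\bbf_p$, i.e.\ $\ord(w)\mid p-1$ or $\ord(w)\mid p+1$. When $s\ge1$ and $\ell$ is odd this is quick: $\ell\mid\ord(z)\mid p-\epsilon$ forces $v_\ell(p-\epsilon)=v_p\ge n+s$, so $\ell^{\,n+s}M\mid p-\epsilon$, whence $w^{p-\epsilon}=1$ and $w+w^{-1}\in\bbf_p$. The work is in the remaining situations. For $\ell=2$ one must verify that the preperiod bound still places $\ell^{\,n+s}$ on the side of $\{p-1,p+1\}$ carrying the larger $2$-adic valuation (for $\ell$ odd this is automatic once $\ell\mid\ord(z)$). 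In the periodic case $s=0$ the largest root order is $\ell^{n}M$, and one must show $\ell^{n}M$ divides $p-1$ or $p+1$; here $\ord(z)=M$ fixes which of $p\mp1$ is divisible by $M$ but says nothing about $\ell^{n}$, so making the valuations line up is the delicate point. I expect this case analysis --- reconciling $v_\ell(p\mp1)$ with $v_p=\max$ under the relevant congruence conditions on $\ord(z)$, alongside the bookkeeping for the $w\leftrightarrow w^{-1}$ double cover and the degenerate fibres $\overline t\equiv\pm2$ --- to be the main obstacle; a tidy alternative is to first establish the structure theorem for the graph of $T_\ell$ over every $\bbf_{p^k}$ and then read both statements off the shape of the backward orbit tree of $\overline t$.
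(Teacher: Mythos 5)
Your parametrization $x=w+w^{-1}$, the identification $\pper_{T_\ell,\bbf_{p^j}}(w+w^{-1})=\nu_\ell(\ord w)$, and the lifting-the-exponent computation are precisely the paper's Proposition \ref{prop:preperiod/period calculation}, the weight discussion of Section 2.2, and Lemma \ref{lem:valuation}; the paper merely packages them into a global structure theorem for the graph before reading off the factorization in Theorem \ref{th:main}, whereas you argue root by root. Your part (1) is complete: $\pper(\,\overline t\,)=v_p>0$ forces $\ord(z)=\ell^{v_p}M$ with $\ell^{v_p}M$ dividing whichever of $p\pm1$ carries the maximal $\ell$-valuation, every $w$ with $w^{\ell^n}=z$ then has order $\ell^{n+v_p}M$, and your valuation computation gives $[\bbf_p(w+w^{-1}):\bbf_p]=\ell^n$ exactly.

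The cases you leave open in part (2) are a genuine gap, and they cannot be closed: the statement as literally written fails there, so no amount of valuation bookkeeping will line things up. For $s=0$ take $\ell=3$, $p=7$, $\overline t=3$: then $v_7=1$ and $3$ lies on the $2$-cycle $\{3,4\}$ coming from the divisor $8$ of $p+1$ (the side with trivial $3$-valuation), and $T_3(x)-3\equiv(x-4)(x^2+4x+6)\pmod 7$ with the quadratic irreducible --- the two strictly preperiodic roots have $\ord(w)=24$ and live in $\bbf_{49}$. For $\ell=2$ take $p=7$, $\overline t=1$: here $v_7=3$, $\pper(1)=1$, $n=1\le 2$, yet $T_2(x)-1=x^2-3$ is irreducible modulo $7$ because $\ord(z)=6$ sits over the divisor $3$ of $p-1=D_2$. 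This is consistent with the paper, which derives Theorem \ref{th:intro.1} as a rewording of Theorem \ref{th:main}, Case 1.1, whose hypotheses are $\ell$ odd \emph{and} $\rho=\pper(\,\overline t\,)>0$; the $\ell=2$ analogue (Case 2.1) additionally requires $\overline t$ to sit over a $D_1$-cycle, and the periodic case is handled separately in Cases 1.3--1.4, where the factors can have degree $2$. Under those intended hypotheses your ``quick'' computation --- $\ell\mid\ord(z)\mid p-\epsilon$ forces $\nu_\ell(p-\epsilon)=v_p\ge n+s$, hence $\ord(w)=\ell^{n+s}M\mid p-\epsilon$ and $w+w^{-1}\in\bbf_p$ --- already finishes part (2). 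So your argument proves the statement the paper actually establishes; the residual cases you correctly flagged as delicate are exactly the ones the hypotheses of Theorem \ref{th:main}, Case 1.1 exclude, and should be discarded rather than pursued.
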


By a classical result of Dedekind, for all but finitely many primes, the factorization of the polynomial modulo $p$ and the decomposition of the ideal $p\bbz$ are linked. In particular, the factorization results define the following behavior.

\begin{cor}
Suppose $p$ does not divide the discriminant of $T_\ell^n(x)-t$. Then
\begin{enumerate}
\item $p$ is inert in $K_n$ (that is, $p\bbz$ is a prime ideal in $\cal O_{K_n}$) if $v_p > 0$ and $\pper_{T_\ell,\bbf_p}(\,\overline t\,) = v_p$;
\item $p$ splits in $K_n$ (that is, $p\bbz = \mathfrak p_1 \cdots \mathfrak p_{\ell^n}$) if $v_p > 0$ and $n \le v_p - \pper_{T_\ell, \bbf_p}(\,\overline t\,)$.
\end{enumerate}
\end{cor}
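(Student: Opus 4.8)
The corollary is an almost immediate consequence of Theorem \ref{th:intro.1} together with the Dedekind factorization theorem, so the proof plan is short. The plan is to invoke Dedekind's theorem to transfer the factorization of $T_\ell^n(x)-t \bmod p$ to the splitting type of $p\bbz$ in $\cal O_{K_n}$, and then feed in the two cases of Theorem \ref{th:intro.1}.

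First I would recall the precise statement of Dedekind's theorem: if $p$ does not divide $\disc(T_\ell^n(x)-t)$ --- equivalently (up to the index $[\cal O_{K_n} \colon \bbz[\theta_n]]$, which is also coprime to $p$ in this case) $p$ does not divide $\disc(\cal O_{K_n})$ --- then the factorization of $p\bbz$ into prime ideals of $\cal O_{K_n}$ mirrors exactly the factorization of $T_\ell^n(x)-t$ into irreducibles in $\bbf_p[x]$: each irreducible factor of degree $f$ and multiplicity $e$ contributes a prime $\mathfrak p$ with residue degree $f$ and ramification index $e$. Since $p \nmid \disc(T_\ell^n(x)-t)$, the polynomial is separable mod $p$, so all multiplicities are $1$ and $p$ is unramified in $K_n$.

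Next I would apply part (1) of Theorem \ref{th:intro.1}: under the hypothesis $v_p > 0$ and $\pper_{T_\ell,\bbf_p}(\,\overline t\,) = v_p$, the polynomial $T_\ell^n(x)-t$ is irreducible mod $p$, hence (being separable, by the discriminant hypothesis) it is a single irreducible factor of degree $\ell^n = [K_n \colon \bbq]$; by Dedekind this means $p\bbz$ is prime in $\cal O_{K_n}$, i.e. $p$ is inert. For part (2), the hypothesis $v_p > 0$ and $n \le v_p - \pper_{T_\ell,\bbf_p}(\,\overline t\,)$ gives, via Theorem \ref{th:intro.1}(2), that $T_\ell^n(x)-t$ splits completely into linear factors in $\bbf_p[x]$; these factors are distinct by separability, so Dedekind yields $p\bbz = \mathfrak p_1 \cdots \mathfrak p_{\ell^n}$ with all residue degrees and ramification indices equal to $1$, i.e. $p$ splits completely.

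There is essentially no obstacle here: the only point requiring a word of care is the standard caveat that Dedekind's criterion relates the factorization of the \emph{minimal polynomial} to the factorization of $p$ in $\bbz[\theta_n]$, and one must note that $p \nmid \disc(T_\ell^n(x)-t)$ forces $p \nmid [\cal O_{K_n} \colon \bbz[\theta_n]]$, so that $\bbz[\theta_n]$ and $\cal O_{K_n}$ agree locally at $p$ and the conclusion lifts to $\cal O_{K_n}$. This is why the hypothesis is phrased in terms of $\disc(T_\ell^n(x)-t)$ rather than $\disc(K_n)$. Everything else is a direct substitution of Theorem \ref{th:intro.1} into the dictionary provided by Dedekind.
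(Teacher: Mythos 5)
Your proposal is correct and matches the paper's (implicit) argument exactly: the paper derives this corollary by citing Dedekind's theorem to translate the mod-$p$ factorizations of Theorem \ref{th:intro.1} into the splitting of $p\bbz$ in $\cal O_{K_n}$, which is precisely what you do. Your added remark about $p \nmid \disc(T_\ell^n(x)-t)$ forcing $p \nmid [\cal O_{K_n}\colon \bbz[\theta_n]]$ is the right technical justification for why the hypothesis is stated in terms of the polynomial discriminant.
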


There are two cases that deserve special mention: the case $\ell$ is an odd prime and $t = 2$, and the case $\ell=2$ and $t=0$. For a generic choice of $t$, the extension $K_n$ is not Galois. Passing to the Galois closure $K_n^{\Gal}$, the Galois group $\Gal(K_n^{\Gal},\bbq)$ is non-abelian and is isomorphic to a (possibly very large) subgroup of the wreath product $\Wr(\bbz/n\bbz,S_\ell)$ (\cite{Sil07}, Theorem 3.56). In the first of the two cases listed above, the splitting field of $T_\ell^n(x)-2$ is an abelian extension of $\bbq$. In fact, the splitting field is $\bbq(\zeta_{\ell^n})^+$, the maximal totally real subfield of the cyclotomic field $\bbq(\zeta_{\ell^n})$, where $\zeta_{\ell^n}$ is a primitive $\ell^n$-th root of unity. In the latter case, the Chebyshev radical extension generated by $T_2^n(x)$ is $\bbq(\zeta_{2^{n+2}})^+$. In both cases, the decomposition of primes in the towers are known by consequence of the cyclotomic reciprocity law (\cite{Was97}, Theorem 2.13). Namely, for any prime $\ell$, the prime $p$ splits completely in $\bbq(\zeta_{\ell^n})^+$ if and only if $p$ is congruent to $\pm1$ modulo $\ell^n$. Our decomposition result provides an alternative proof of cyclotomic reciprocity in the totally real case, and more generally may be viewed as an extension of cyclotomic reciprocity to non-abelian extensions of $\bbq$.

The structure of the paper is the following. In Section 2, we give a complete description of the graph of $T_\ell$ over $\bbf_q$. We combine our knowledge of the graphs with a result by Aitken, Hajir, and Maire to prove our main theorem in Section 3. The connection to cyclotomic reciprocity is also presented in this section. In Section 4, we answer a question posed by Jones regarding the density of periodic points as the order of the field goes to infinity.

\section{\large Description of Graphs}
The study of maps over finite fields has a long history. \emph{Permutation polynomials}---polynomials giving a permutation of $\bbf_q$---are of particular interest due to their number theoretic properties and cryptographic applications. The \emph{Dickson polynomials} provide the classic examples. The Dickson polynomial $D_{d,a}$ is the unique, degree-$d$ polynomial satisfying the identity
\begin{align*}
D_{d,a}(z+az\inv) = z^d + \left(\frac{a}{z}\right)^d,
\end{align*}
and from the definitions, it is clear that $D_{d,1}(x)=T_d(x)$ and $D_{d,0}(x) = x^d$. It is known that for $a \in \bbf_q^\times$, the polynomial $D_{d,a}$ permutes $\bbf_q$ if and only if $\gcd(d,q^2-1) = 1$, and $D_{d,0}$ permutes $\bbf_q$ if and only if $\gcd(d,q-1) = 1$ (\cite{LMT93}, Theorem 3.2 and Theorem 3.1, respectively). Furthermore, the cycle structure of $D_{d,a}$ over $\bbf_q$ for $a \in \{-1, 0, 1\}$ is given in Lidl and Mullen \cite{LM91}. For more on permutation polynomials, see Section 7 of Lidl and Niederreiter \cite{LN97}.

As our previous discussion indicated, we require a theory that includes a description of the vertices that are not contained in cycles. For the remainder of the paper, we let $\ell$ be a prime number, $p$ a prime different from $\ell$, and we define $G(\ell,p,n)$ to be the graph of $T_\ell$ over $\bbf_{p^n}$. Since the graph is a direct representation of the orbits of elements in the finite field, the terms from dynamics naturally extend to describe vertices in the graph and vice versa. For example, `periodic' and `contained in a cycle' are synonymous in this setting. Likewise, `preperiod' is the same as the notion `distance to the nearest cycle'.

\subsection{Structure of $G(\ell,p,n)$}
For any polynomial over any finite field, the components of the graph have the same basic description: every component contains exactly one cycle, and the preperiodic vertices (if any) are arranged into trees branching out from the cycle. For a generic map, the branching of the trees varies greatly making the symmetric branching in the case of the Chebyshev polynomials all the more intriguing. The symmetric arrangement of the preperiodic vertices for the maps $T_2$ and $x^2$ over fields of prime order was studied by Vasiga and Shallit \cite{VS04}. Using similar methods, we generalize their results to the graphs $G(\ell,p,n)$.

For the remainder of this section we only consider the map $T_\ell$ over $\bbf_{p^n}$, henceforth $\pper(a)$ and $\per(a)$ will be understood to signify the preperiod and period, respectively, of an element $a \in \bbf_{p^n}$ with respect to $T_\ell$, unless otherwise noted. Let
\begin{align*}
c(d) = \ord_{(\bbz/d\bbz)^\times/(\pm 1)}\ell.
\end{align*}

\begin{prop} \label{prop:preperiod/period calculation}
Let $a \in \bbf_{p^n}$ and $\alpha \in \bbf_{p^{2n}}^\times$ be a root of $u(x)$, where $u(x) = x^2-ax+1$ (hence $a = \alpha + \alpha\inv$). We write $\ord_{\bbf_{p^{2n}}^\times}\alpha = \ell^k d$ where $\gcd(\ell,d) =1$. Then $\pper(a) = k$ and $\per(T_\ell^k(a)) = c(d)$.
\end{prop}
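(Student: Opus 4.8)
The plan is to pass from the additive dynamics of $T_\ell$ on $\bbf_{p^n}$ to the multiplicative dynamics of the $\ell$-power map on the conjugate root $\alpha$, using the defining identity $T_\ell(z+z\inv)=z^\ell+z^{-\ell}$. Writing $a=\alpha+\alpha\inv$ with $\alpha\in\bbf_{p^{2n}}^\times$, this identity gives $T_\ell^j(a)=\alpha^{\ell^j}+\alpha^{-\ell^j}$ for all $j\ge 0$. So the orbit of $a$ under $T_\ell$ is completely governed by the orbit of $\alpha$ under the map $z\mapsto z^\ell$ on the cyclic group $\bbf_{p^{2n}}^\times$, \emph{modulo the identification $z\sim z\inv$}. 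First I would record precisely this: $T_\ell^i(a)=T_\ell^j(a)$ if and only if $\alpha^{\ell^i}=\alpha^{\ell^j}$ or $\alpha^{\ell^i}=\alpha^{-\ell^j}$, i.e. $\alpha^{\ell^i}=\alpha^{\pm\ell^j}$. (One must note $a$ genuinely lies in $\bbf_{p^n}$ and $\alpha$ in $\bbf_{p^{2n}}^\times$ — since $u(x)=x^2-ax+1$ has constant term $1$, its roots are units, and they are $\alpha,\alpha\inv$; either both lie in $\bbf_{p^n}$ or they are Frobenius-conjugate over it, in any case $\alpha\in\bbf_{p^{2n}}^\times$.)

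Next I would translate the preperiod. Let $N=\ord_{\bbf_{p^{2n}}^\times}\alpha=\ell^k d$ with $\gcd(\ell,d)=1$, and work inside the cyclic group $C=\langle\alpha\rangle$ of order $N$. The $\ell$-power map on $C$ is an endomorphism; its restriction to the $d$-torsion subgroup $C[d]$ (order $d$, coprime to $\ell$) is an automorphism, while on the $\ell$-primary part it is eventually the trivial-on-that-factor, collapsing it in exactly $k$ steps. Concretely, $\alpha^{\ell^k}$ generates $C[d]$, and $k$ is the least $j$ with $\alpha^{\ell^j}\in C[d]$. I would show $\pper(a)=k$ by checking both inequalities: for $j<k$, $\alpha^{\ell^j}$ has order divisible by $\ell$, hence $\alpha^{\ell^j}\ne\alpha^{\pm\ell^i}$ for any $i>j$ because the right side would also have to have $\ell$-part of the same size while lying strictly later in the collapsing orbit — one verifies the $\ell$-adic valuation of the order strictly drops at each of the first $k$ steps and the $\pm$ sign cannot repair this since inversion preserves order; and for $j=k$, the point $T_\ell^k(a)$ is periodic because $\alpha^{\ell^k}$ lies in $C[d]$ on which $z\mapsto z^\ell$ is invertible, so its forward orbit returns to it.

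For the period, set $\beta=\alpha^{\ell^k}$, a generator of $C[d]$ of order exactly $d$. Then $\per(T_\ell^k(a))$ is the least $\pi\ge 1$ with $\beta^{\ell^\pi}=\beta^{\pm1}$, equivalently (since $\beta$ has order $d$) the least $\pi$ with $\ell^\pi\equiv\pm1\pmod d$, which is by definition $\ord_{(\bbz/d\bbz)^\times/(\pm1)}\ell=c(d)$. This is immediate once the equivalence "$T_\ell^\pi(b)=b$ iff $\gamma^{\ell^\pi}=\gamma^{\pm1}$" from the first step is in hand, applied to $b=T_\ell^k(a)$ and $\gamma=\beta$.

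I expect the main obstacle to be the preperiod lower bound — proving that no collision $\alpha^{\ell^i}=\alpha^{\pm\ell^j}$ with $i\le j$ can occur before step $k$, where the $\pm$ sign is the genuinely new feature relative to the pure $z\mapsto z^\ell$ analysis. The clean way is to track $w:=\mathrm{ord}_\ell\big(\ord(\alpha^{\ell^j})\big)$: one shows $w$ equals $\max(k-j,0)$, is therefore strictly decreasing on $0\le j\le k$, and is invariant under $z\mapsto z\inv$; hence $\alpha^{\ell^i}$ and $\alpha^{\pm\ell^j}$ have orders with different $\ell$-parts whenever $i\ne j$ and both $\le k$, while for $i=j<k$ the sign issue reduces to $\alpha^{2\ell^i}=1$, impossible since $\mathrm{ord}(\alpha^{\ell^i})$ is then divisible by $\ell>1$ (treating $\ell=2$ separately if needed, though there $a=\alpha+\alpha\inv$ forces a short check). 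Everything else is bookkeeping with cyclic groups and the composition law $T_\ell^j=T_{\ell^j}$.
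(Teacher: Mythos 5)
Your proposal is correct and follows essentially the same route as the paper: both pass through the identity $T_\ell^j(a)=\alpha^{\ell^j}+\alpha^{-\ell^j}$, factor the collision equation into $\alpha^{\ell^i}=\alpha^{\pm\ell^j}$, and reduce everything to divisibility conditions on $\ord(\alpha)=\ell^k d$, yielding $\rho\ge k$ from the $\ell$-part and $\ell^\pi\equiv\pm1\pmod d$ from the prime-to-$\ell$ part. The only difference is that you spell out the minimality step via the $\ell$-valuation of orders, where the paper simply writes the chain of equivalences and invokes minimality of $\rho$ and $\pi$.
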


\begin{proof}
Suppose that $\pper(a) = \rho$, and $\per(T_\ell^\rho(a)) = \pi$. Then $a$ satisfies the relation $T^{\pi+\rho}_\ell (a) = T_\ell^\rho(a)$, which leads to the following equivalences.
\begin{align*}
T^{\pi+\rho}_\ell (a) = T^\rho_\ell (a) \quad&\Leftrightarrow\quad \alpha^{\ell^{\pi+\rho}} + \alpha^{-\ell^{\pi+\rho}} = \alpha^{\ell^\rho}+\alpha^{-\ell^\rho}\\
&\Leftrightarrow\quad \alpha^{\ell^{\pi+\rho}}\alpha^{\ell^{\pi+\rho}} -\alpha^{\ell^\rho}\alpha^{\ell^{\pi+\rho}} - \alpha^{-\ell^\rho}\alpha^{\ell^{\pi+\rho}}+ 1= 0\\
&\Leftrightarrow\quad \left(\alpha^{\ell^{\pi+\rho}} - \alpha^{\ell^\rho}\right)\left(\alpha^{\ell^{\pi+\rho}} - \alpha^{-\ell^\rho}\right)= 0\\
&\Leftrightarrow\quad \alpha^{\ell^{\pi+\rho}} = \alpha^{\ell^\rho} \quad \text{ or } \quad \alpha^{\ell^{\pi+\rho}} = \alpha^{-\ell^\rho}\\
&\Leftrightarrow\quad \alpha^{\ell^\rho(\ell^\pi-1)} = 1 \quad \text{ or }\quad \alpha^{\ell^\rho(\ell^\pi+1)} = 1\\
&\Leftrightarrow\quad \ell^k d \mid \ell^\rho(\ell^\pi - 1) \quad \text{ or } \quad \ell^k d \mid \ell^\rho(\ell^\pi + 1)
\end{align*}
which implies that $k \mid \rho$ and $\ell^\pi \equiv \pm 1 \pmod d$. Due to the minimality of $\pi$ and $\rho$, it follows that $\rho = k$ and $\pi = c(d)$.
\end{proof}

Note that if $u$ is irreducible, the roots of $u$ are permuted by the Frobenius endomorphism. That is, $\alpha^{p^n} = \alpha\inv$, hence $\alpha$ is an element of the cyclic subgroup of order $p^n+1$, which we denote by $H$. Otherwise $u$ is reducible, so $\alpha \in \bbf_{p^n}^\times$. In fact, the map $x \mapsto x+x\inv$ gives a 2-to-1 correspondence between the elements of these cyclic subgroups and elements of $\bbf_{p^n}$ whenever $\alpha \neq \alpha\inv$, which is precisely when $\alpha \not\in\{-1,1\}$. For $\alpha \in \bbf_{p^n}^\times$, this correspondence is clear, and for $\alpha \in H \setminus\{-1,1\}$, the map is the trace of $\alpha$. Thus the orbits of elements of $\bbf_{p^n}$ are completely classified by the orders of elements in $\bbf_{p^n}^\times$ and $H$. Namely, for each divisor $d>2$ of $p^n-1$, there are $\varphi(d)$ elements of $\bbf_{p^n}^\times$ of order $d$, which in turn correspond to $\varphi(d)/2$ elements of $\bbf_{p^n}$. Likewise, for every divisor $d>2$ of $p^n+1$, there are $\varphi(d)$ elements of $H$ of order $d$, which correspond to $\varphi(d)/2$ elements of $\bbf_{p^n}$. ($\varphi$ denotes the Euler totient function.) The divisors 2 and 1 correspond to $\alpha = -1$ and $\alpha = 1$, respectively, which yield $-2$ and $2$ in $\bbf_{p^n}$. The preperiod of $a \in \bbf_{p^n}$ is the $\ell$-valuation of the order of the corresponding $\alpha$, so to distinguish the $\ell$ part, we write
\begin{align*}
p^n-1 = \ell^{\lambda^-}\omega^- \quad \text{ and } \quad p^n+1 = \ell^{\lambda^+}\omega^+
\end{align*}
where $\gcd(\ell,\omega^-) = \gcd(\ell,\omega^+) = 1$. Additionally, we introduce the following terms to describe the trees of preperiodic vertices.

\begin{defn}
A \emph{rooted tree} is a directed tree graph where all the edges are oriented towards a specified vertex, the \emph{root}. The \emph{indegree} of a vertex is the number of incoming edges to that vertex, and a vertex is a \emph{leaf} if its indegree is zero. The \emph{height} of a vertex is the distance from that vertex to the root, and the \emph{height} of a rooted tree is the length of the longest path in the tree. Necessarily, this path begins at a leaf and terminates at the root. A rooted tree is \emph{$n$-ary} if the indegree of every non--leaf is $n$, and a tree is \emph{complete} $n$-ary if all leaves have the same height.
\end{defn}

\begin{thm} \label{th:structure of graph}
\
\begin{enumerate}
\item For each divisor $d > 2$ of $\omega^-$, the graph $G(\ell,p,n)$ contains $\varphi(d)/(2c(d))$ cycles of length $c(d)$. If $\lambda^-\ge 1$, every vertex in these cycles is adjacent to $\ell-1$ strictly preperiodic vertices. Each of these preperiodic vertices is the root of a complete $\ell$-ary tree of height $\lambda^--1$.

\item For each divisor $d > 2$ of $\omega^+$, the graph $G(\ell,p,n)$ contains $\varphi(d)/(2c(d))$ cycles of length $c(d)$. If $\lambda^+\ge 1$, every vertex in these cycles is adjacent to $\ell-1$ strictly preperiodic vertices. Each of these preperiodic vertices is the root of a complete $\ell$-ary tree of height $\lambda^+-1$.

\item If $\ell$ is an odd prime, the vertices $2$ and $-2$ are fixed. If $\max\{\lambda^-, \lambda^+\} \ge 1$, each of these vertices is adjacent to $(\ell-1)/2$ strictly preperiodic vertices. Each of these preperiodic vertices is the root of a complete $\ell$-ary tree of height $\max\{\lambda^-, \lambda^+\} -1$.

\item If $\ell=2$, then $G(\ell,p,n)$ contains the edges $(2,2), (-2,2)$, and $(0,-2)$. Moreover, 0 is the root of a complete 2-ary (binary) tree of height $\max\{\lambda^-, \lambda^+\}-2$.
\end{enumerate}
\end{thm}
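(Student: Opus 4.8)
The plan is to reduce everything to the order computation of Proposition \ref{prop:preperiod/period calculation}, using the $2$-to-$1$ correspondence $\alpha \mapsto \alpha + \alpha\inv$ between $\bbf_{p^n}^\times \cup H$ (modulo inversion) and $\bbf_{p^n}$ described just after that proposition. Write $N^- = p^n-1 = \ell^{\lambda^-}\omega^-$ and $N^+ = p^n+1 = \ell^{\lambda^+}\omega^+$. For part (1), I would fix a divisor $d > 2$ of $\omega^-$; the elements $\alpha \in \bbf_{p^n}^\times$ whose order is of the form $\ell^k d$ for some $0 \le k \le \lambda^-$ are exactly those lying in the unique cyclic subgroup of order $\ell^{\lambda^-}d$, so there are $\varphi(\ell^{\lambda^-}d)$ of them. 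By Proposition \ref{prop:preperiod/period calculation}, an element $\alpha$ of order $\ell^k d$ gives an $a \in \bbf_{p^n}$ with $\pper(a) = k$ and $\per(T_\ell^k(a)) = c(d)$; since $c(d)$ depends only on $d$, all of these $a$'s land (after $k$ steps) on the same collection of cycles of length $c(d)$. The periodic elements among them are those with $k = 0$, i.e.\ the $\varphi(d)$ elements of order exactly $d$, giving $\varphi(d)/2$ periodic vertices; grouped into cycles of length $c(d)$ this is $\varphi(d)/(2c(d))$ cycles. Part (2) is identical with $H$ (the cyclic group of order $p^n+1$) in place of $\bbf_{p^n}^\times$, and $\omega^+, \lambda^+$ in place of $\omega^-, \lambda^-$.

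For the tree structure in parts (1) and (2), I would argue as follows. The map $T_\ell$ restricted to the set of elements with a fixed ``eventual cycle'' and preperiod $\le \lambda^-$ is modeled upstairs by $\alpha \mapsto \alpha^\ell$ on the cyclic group of order $\ell^{\lambda^-}d$, modulo the inversion identification $\alpha \sim \alpha\inv$. On a cyclic group $C$ of order $\ell^{\lambda}d$ with $\gcd(\ell,d)=1$, the $\ell$-power map has the property that each element has exactly $\ell$ preimages if it lies in the image (i.e.\ is an $\ell$-th power) and $0$ otherwise, and an element of order $\ell^k d$ is an $\ell$-th power iff $k < \lambda$. Translating through the $2$-to-$1$ quotient: a periodic vertex $a$ (order $d$, so $\alpha$ and $\alpha\inv$ both have order $d$) has $\ell$ preimages in $C$ that are identified in pairs \emph{except} we must be careful — one of the $\ell$ preimages of $\alpha$ is periodic and the $\ell$ preimages of $\alpha\inv$ include the periodic one too; after quotienting, $a$ has $1$ periodic preimage and $(\ell-1)$ strictly preperiodic preimages when $\lambda^- \ge 1$. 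For a strictly preperiodic vertex of height $h < \lambda^- - 1$ the corresponding $\alpha$ has order $\ell^{k}d$ with $1 \le k$ and $k < \lambda^-$, hence is a nontrivial $\ell$-th power, and is \emph{not} fixed by inversion, so its $\ell$ preimages upstairs descend to exactly $\ell$ distinct preimages downstairs, none periodic — this gives the complete $\ell$-ary branching. At height exactly $\lambda^- - 1$ the corresponding $\alpha$ has order $\ell^{\lambda^-}d$, which is not an $\ell$-th power in $C$, so the vertex is a leaf. Hence each strictly preperiodic neighbor of a cycle vertex is the root of a complete $\ell$-ary tree of height $\lambda^- - 1$, as claimed.

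Parts (3) and (4) handle the degenerate fibers $\alpha \in \{1,-1\}$, where the correspondence fails to be $2$-to-$1$. For part (3) with $\ell$ odd: $\alpha = 1 \mapsto 2$ and $\alpha = -1 \mapsto -2$ are fixed by $T_\ell$ since $T_\ell(2) = 2$ and $T_\ell(-2) = (-1)^\ell \cdot (-2) = -2$ (using $T_\ell(z+z\inv) = z^\ell + z^{-\ell}$ at $z = 1, -1$ and $\ell$ odd). The preimages of $2$ correspond to $\alpha$ with $\alpha^\ell = 1$, i.e.\ $\alpha$ in the $\ell$-Sylow of $\bbf_{p^n}^\times$ or of $H$; the nontrivial such $\alpha$ (there are $\ell^{\lambda^-} - 1$ of them in $\bbf_{p^n}^\times$ and $\ell^{\lambda^+}-1$ in $H$, but these overlap only in $\{1\}$) come in inverse pairs since none equals $1$ and none is an involution ($\ell$ odd), so they give $(\ell-1)/2$ downstairs preimages of $2$ provided $\max\{\lambda^-,\lambda^+\} \ge 1$; wait — more precisely the relevant $\alpha$ of order exactly $\ell$ number $\ell - 1$ (all in whichever of $\bbf_{p^n}^\times, H$ has $\lambda \ge 1$), yielding $(\ell-1)/2$ vertices. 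Each such vertex of height $h$ above $2$ corresponds to $\alpha$ of order $\ell^{h}$, again a nontrivial $\ell$-th power iff $h < \max\{\lambda^-,\lambda^+\}$ and never an involution, so the same branching argument gives a complete $\ell$-ary tree of height $\max\{\lambda^-,\lambda^+\} - 1$; the only subtlety is that $2$ itself sits at the confluence of the two Sylow subgroups, so we take the larger of $\lambda^-, \lambda^+$. The symmetric statement holds at $-2$. For part (4) with $\ell = 2$: here $T_2(x) = x^2 - 2$, and one checks directly $T_2(2) = 2$, $T_2(-2) = 2$, $T_2(0) = -2$, giving the three stated edges; then $0$ corresponds to $\alpha$ with $\alpha^2 = -1$ (order $4$), and its iterated square-preimages of order $2^{j}$ exist for $j$ up to $\max\{\lambda^-,\lambda^+\}$, but the correspondence near $\alpha = \pm 1$ collapses two levels (the fiber over $-2$ is a single point $0$, not two, and $2$ absorbs both of its preimages), so the binary tree rooted at $0$ has height $\max\{\lambda^-,\lambda^+\} - 2$.

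\textbf{Main obstacle.} The routine part is the order counting; the delicate part is the bookkeeping at the degenerate fibers $\alpha \in \{1, -1\}$ (equivalently $a \in \{2, -2\}$, and for $\ell = 2$ also $a = 0$), where the generic $2$-to-$1$ correspondence degenerates to $1$-to-$1$. One must carefully track how the $\ell$ upstairs preimages of a fixed point collapse under the inversion identification — in particular why exactly $(\ell-1)/2$ strictly preperiodic neighbors appear rather than $\ell - 1$, and why for $\ell = 2$ the tree height drops by $2$ rather than $1$. I would make this precise by working out the first two levels of preimages of $2$ (and of $0$ when $\ell=2$) explicitly in terms of the $\ell$-Sylow subgroups of $\bbf_{p^n}^\times$ and $H$, and checking that beyond height $1$ the relevant $\alpha$'s are never fixed by inversion, so the argument of parts (1)--(2) applies verbatim from that level onward.
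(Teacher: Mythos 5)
Your proposal is correct, and for the tree structure it takes a genuinely different route from the paper. The cycle count is handled the same way in both (Proposition \ref{prop:preperiod/period calculation} plus the $2$-to-$1$ correspondence $\alpha \mapsto \alpha + \alpha^{-1}$). For the branching, however, the paper argues globally: it counts the vertices of each preperiod $k$ attached to a given family of cycles via $\varphi(d\ell^k)/2 = (\ell-1)\ell^{k-1}\varphi(d)/2$, observes that the vertices of maximal preperiod are leaves, and then uses the fact that no vertex can have indegree exceeding $\deg T_\ell = \ell$ to force, by pigeonhole, indegree exactly $\ell$ at every non-leaf (and hence $\ell-1$ strictly preperiodic neighbors at each cycle vertex). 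You instead argue locally, lifting each fiber of $T_\ell$ to a fiber of the $\ell$-power map on the relevant cyclic group and pushing it back down through the inversion quotient; the key observations --- an element of order $\ell^k d$ is an $\ell$-th power iff $k < \lambda$, each nonempty fiber has exactly $\ell$ elements, and inversion acts freely on the union of the fibers over $\alpha$ and $\alpha^{-1}$ whenever $\ord\alpha > 2$ --- are all correct and yield the complete $\ell$-ary structure directly. Your treatment of the degenerate fibers in (3) and (4) is likewise sound and in fact more self-contained than the paper's, which simply quotes the multiplicity structure of the roots of $T_\ell(x)\mp 2$ and $T_2(x)+2$; your version explains, rather than merely forces, the drop to $(\ell-1)/2$ neighbors at $\pm 2$ and the two-level collapse at $0$ when $\ell = 2$. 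The paper's counting argument is shorter; yours makes the mechanism of the radial symmetry visible. One small slip: the set of $\alpha \in \bbf_{p^n}^\times$ whose order is $\ell^k d$ for some $0 \le k \le \lambda^-$ has $\ell^{\lambda^-}\varphi(d) = \sum_{k=0}^{\lambda^-}\varphi(\ell^k d)$ elements, not $\varphi(\ell^{\lambda^-}d)$; since you never use that total, nothing downstream is affected.
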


\begin{proof} 
The argument for all four cases is essentially the same, so we will prove (1), and simply remark on the other cases. Fix a divisor $d>2$ of $\omega^-$. By Proposition \ref{prop:preperiod/period calculation} and the previous discussion, $G(\ell,p,n)$ contains $\varphi(d)/2$ vertices of period $c(d)$. If $\lambda^- = 0$, we are done. Otherwise, the vertices of preperiod $k$ that attach to these cycles correspond to the divisor $d\ell^k$ of $p^n-1$, where $1 \le k \le \lambda^-$. There are
\begin{align*}
\frac{\varphi(d\ell^k)}{2} = \frac{\varphi(d)}{2}(\ell-1)\ell^{k-1}
\end{align*}
such vertices. Certainly, the vertices of preperiod $\lambda^-$ have indegree 0, and since the indegree of any vertex cannot exceed $\ell$, it follows that the indegree of all other vertices in these components is $\ell$. This concludes the proof of (1), and the argument for (2) is identical. Cases (3) and (4) involve special values for the Chebyshev polynomials. It is known that when $\ell$ is odd, $\pm2$ is a fixed point of $T_\ell(x)\mp2$ and all other roots have multiplicity 2. Likewise, 0 is a multiple root of $T_2(x)+2$.
\end{proof}

Furthermore, we have a precise count for the number of vertices of a given preperiod.

\begin{thm} \label{th:point count}
Let $\lambda_m = \max\{\lambda^-, \lambda^+\}$, and if $\lambda_m > 0$, let $\omega_m$ be the integer for which $\ell^{\lambda_m}\omega_m \in \{p^n-1, p^n+1\}$.

\begin{enumerate}
\item For $\ell$ odd, the number of vertices of preperiod $k$ in $G(\ell,p,n)$ is given by
\begin{center}
\renewcommand{\arraystretch}{2}
\begin{tabular}{|l|c|c|}
\hline
Preperiod & $k=0$ & $1 \le k \le \lambda_m$\\
\hline
\# of points & $\ds\frac{\omega^- + \omega^+}{2}$ & $(\ell-1)\ell^{k-1}\ds\frac{\omega_m}{2}$\\[1.5mm]
\hline
\end{tabular}
\end{center}

\item For $\ell=2$, the number of vertices of preperiod $k$ in $G(2,p,n)$ is given by
\begin{center}
\renewcommand{\arraystretch}{2}
\begin{tabular}{|l|c|c|c|}
\hline
Preperiod & $k=0$ & $k=1$ & $2 \le k \le \lambda_m$\\
\hline
\# of points & $\ds\frac{\omega^- + \omega^+}{2}$ & $\ds\frac{\omega^- + \omega^+}{2}$ & $2^{k-2}\,\omega_m$\\[1.5mm]
\hline
\end{tabular}
\end{center}
\end{enumerate}
\end{thm}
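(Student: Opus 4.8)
The plan is to translate the count into a question about orders of elements in the two cyclic groups that govern the whole picture: $\bbf_{p^n}^\times$, of order $p^n-1=\ell^{\lambda^-}\omega^-$, and the order-$(p^n+1)$ subgroup $H\subset\bbf_{p^{2n}}^\times$, of order $\ell^{\lambda^+}\omega^+$. By the correspondence recalled just before Theorem \ref{th:structure of graph}, sending $a$ to a root $\alpha$ of $x^2-ax+1$ is two-to-one from $(\bbf_{p^n}^\times\cup H)\setminus\{\pm1\}$ onto $\bbf_{p^n}\setminus\{\pm2\}$, while the two leftover elements $1$ and $-1$ map to the vertices $2$ and $-2$; and by Proposition \ref{prop:preperiod/period calculation}, $\pper(a)$ is the $\ell$-adic valuation of $\ord\alpha$. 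Hence the number of vertices of preperiod $k$ equals one half the number of $\alpha\in(\bbf_{p^n}^\times\cup H)\setminus\{\pm1\}$ with $v_\ell(\ord\alpha)=k$, plus $1$ for the vertex $2$ when $k=0$, and plus $1$ for the vertex $-2$ when $k$ equals $v_\ell(\ord(-1))$ (that is, $k=0$ if $\ell$ is odd and $k=1$ if $\ell=2$).

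I would first record the elementary fact that a cyclic group of order $\ell^\lambda\omega$ with $\gcd(\ell,\omega)=1$ has exactly $\omega$ elements whose order is prime to $\ell$, exactly $(\ell-1)\ell^{j-1}\omega$ elements whose order has $\ell$-adic valuation $j$ for each $1\le j\le\lambda$, and none for $j>\lambda$ (subtract the number of elements of order dividing $\ell^{j-1}\omega$ from the number of those of order dividing $\ell^j\omega$). Applying this once to $\bbf_{p^n}^\times$ and once to $H$ produces the raw totals, and what remains is to merge the two sources while keeping track of the elements $\pm1$, which lie in both groups and correspond to the vertices $\pm2$.

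The observation that makes everything collapse to the stated tables is the divisibility $\gcd(p^n-1,p^n+1)\mid 2$. For $\ell$ odd this forces $\min\{\lambda^-,\lambda^+\}=0$, so only the group attached to $\lambda_m$ (whose prime-to-$\ell$ part is $\omega_m$) contributes a positive preperiod: for $1\le k\le\lambda_m$ it contributes $(\ell-1)\ell^{k-1}\omega_m$ elements, none of which is $\pm1$, hence $(\ell-1)\ell^{k-1}\omega_m/2$ vertices. For $k=0$ both groups contribute ($\omega^-$ and $\omega^+$ elements), $1$ and $-1$ are each counted once in each group, and adding back the $\pm2$ correction leaves exactly $(\omega^-+\omega^+)/2$. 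For $\ell=2$ one has instead $\min\{\lambda^-,\lambda^+\}=1$ and $\lambda_m\ge2$ (since $p$ is odd), so both the $k=0$ row and the $k=1$ row receive contributions from both groups; tracking $\pm1$ as before, these two rows each evaluate to $(\omega^-+\omega^+)/2$ (the $k=1$ row absorbing the order-$2$ element $-1$ of each group), while for $2\le k\le\lambda_m$ only the $\lambda_m$-group contributes $2^{k-1}\omega_m$ elements, none equal to $\pm1$, giving $2^{k-2}\omega_m$ vertices. I would also remark that the Chebyshev-degenerate points $0,\pm2$ are already consistent with Proposition \ref{prop:preperiod/period calculation} (for instance $0$ corresponds to a primitive fourth root of unity, whence $\pper(0)=2$ when $\ell=2$), so the special components of Theorem \ref{th:structure of graph}(3)--(4) need no separate treatment here; the case $p=2$, possible only when $\ell$ is odd, is covered by the same argument with $-1=1$.

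The only genuine work is the combinatorial bookkeeping of merging $\bbf_{p^n}^\times$ with $H$ and placing the $+1$ corrections for $\pm2$ in the correct preperiod class; the cyclic-group count and the gcd observation are routine. As a final consistency check I would verify that, in each of the two cases, the column sums add up to $p^n=|\bbf_{p^n}|$, using $\ell^{\lambda^-}\omega^-+\ell^{\lambda^+}\omega^+=2p^n$ together with a finite geometric series, which simultaneously confirms the formulas and the boundary values of $k$.
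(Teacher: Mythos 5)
Your proposal is correct and follows essentially the same route as the paper: both rest on the $2$-to-$1$ correspondence $\alpha\mapsto\alpha+\alpha^{-1}$ between $(\bbf_{p^n}^\times\cup H)\setminus\{\pm1\}$ and $\bbf_{p^n}\setminus\{\pm2\}$ together with Proposition \ref{prop:preperiod/period calculation}, the only cosmetic difference being that you count elements of a cyclic group by the $\ell$-valuation of their order directly, whereas the paper sums $\varphi(d\ell^k)/2$ over divisors $d$ of $\omega_m$ --- an identical computation. Your explicit tracking of the overlap $\{\pm1\}$ and the $\pm2$ corrections, and the column-sum consistency check, are sound.
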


\begin{proof}
When $\ell=2$, both $\omega^-$ and $\omega^+$ are odd. Thus the number of periodic vertices is
\begin{align*}
1+\sum_{\substack{ d \,\mid\, \omega^- \\ d\, >\, 2}} \frac{\varphi(d)}{2} + \sum_{\substack{ d \,\mid\, \omega^+ \\ d\, >\, 2}}\frac{\varphi(d)}{2} = 1+\frac{\omega^- -1}{2} + \frac{\omega^+ -1}{2} = \frac{\omega^- + \omega^+}{2}.
\end{align*}
Here, $\min\{\lambda^-, \lambda^+\}=1$, hence by Theorem \ref{th:structure of graph} there is one vertex of preperiod 1 adjacent to each periodic vertex. For all other preperiods, the number of vertices is
\begin{align*}
\frac{\varphi(2^k)}{2} + \sum_{\substack{ d \,\mid\, \omega_m \\ d\, >\, 2}} \frac{\varphi(2^kd)}{2} = 2^{k-2} + 2^{k-2}(\omega_m-1) = 2^{k-2}\omega_m.
\end{align*}
When $\ell$ is odd, both $\omega^-$ and $\omega^+$ are even. Hence the number of periodic vertices is
\begin{align*}
2+\sum_{\substack{ d \,\mid\, \omega^- \\ d\, >\, 2}} \frac{\varphi(d)}{2} + \sum_{\substack{ d \,\mid\, \omega^+ \\ d\, >\, 2}}\frac{\varphi(d)}{2} = 2+\frac{\omega^- -2}{2} + \frac{\omega^+ -2}{2} = \frac{\omega^- + \omega^+}{2}.
\end{align*}
In this case, $\min\{\lambda^-, \lambda^+\} = 0$, so for all other preperiods, the number of vertices is
\begin{align*}
\varphi(\ell^k) + \sum_{\substack{ d \,\mid\, \omega_m \\ d\, >\, 2}} \frac{\varphi(d\ell^k)}{2} = \varphi(\ell^k) + \varphi(\ell^k)\frac{\omega_m-2}{2} = (\ell-1)\ell^{k-1}\frac{\omega_m}{2}.\end{align*}
\end{proof}

\begin{example}
Using Theorem \ref{th:structure of graph} we determine the structure of the graph of $T_3$ over $\bbf_{53}$ by considering the divisors of 52 and 54 ($p-1$ and $p+1$, respectively). The graph is shown in Figure \ref{fig:G(3,53,1)}. When the degree of the Chebyshev polynomial is odd, the values $-2$, 0, and 2 are fixed, and we have labeled these values on the graph. Double arrows indicate double roots.
\end{example}

\begin{figure}[!ht]
\begin{tabular}{|c|c|c|c|c|}
\hline
\multicolumn{5}{|c|}{$\ell=3, p = 53, n=1$} \\
\hline\hline
Divisors of 52 & Number of points in $\bbf_{53}$ & Period & Preperiod & Cycles of this type\\ \hline
4 & 1 & 1 & 0 & 1\\
\hline 
13& 6 & 3 & 0 & 2\\
\hline
26 & 6 & 3 & 0 & 2\\
\hline
52 & 12 & 6 & 0 & 2\\
\hline
\hline
Divisors 54 &&&&\\
\hline
1 & 1 & 1 & 0 & \multirow{4}{*}1\\
3 & 1 & - & 1 & \\
9 & 3 & - & 2 & \\
27 & 9 & - & 3 & \\
\hline
2 & 1 & 1 & 0 & \multirow{4}{*}1\\
6 & 1 & - & 1 & \\
18 & 3 & - & 2 & \\
54 & 9 & - & 3 & \\
\hline
\end{tabular}

\vspace{3mm}

\begin{tikzpicture}[>=latex, grow = up,
every node/.style={circle,fill=Green,outer sep = 1mm}]
\node[label=left:2] at (0,0) (2) {}
	child[<<-,rotate=90,level distance=10mm,grow cyclic,level 2/.style ={<-,sibling angle=60},level 3/.style = {sibling angle=40}] { node {}
		child{ node{} 
			child{ node{}}
			child{ node{}}
			child{ node{}}
			}
		child{ node{} 
			child{ node{}}
			child{ node{}}
			child{ node{}}
			}
		child{ node{} 
			child{ node{}}
			child{ node{}}
			child{ node{}}
			}
		};
\draw (2) edge[loop below] (2);
\node[label=left:$-2$] at (5,0) (-2) {}
	child[<<-,rotate=90,level distance=10mm,grow cyclic,level 2/.style ={<-,sibling angle=60},level 3/.style = {sibling angle=40}] { node {}
		child{ node{} 
			child{ node{}}
			child{ node{}}
			child{ node{}}
			}
		child{ node{} 
			child{ node{}}
			child{ node{}}
			child{ node{}}
			}
		child{ node{} 
			child{ node{}}
			child{ node{}}
			child{ node{}}
			}
		};
\draw (-2) edge[loop below] (-2);
\node[label=left:0] at (2.5,0) (0) {};
\draw (0) edge[loop below] (0);

\draw (-3,1.5) node (1) {} ++(240:1) node (2) {}  ++(180:1) node (3) {} ++(120:1) node (4) {} ++(60:1) node (5){} ++(0:1) node(6){};
\draw[->] (1) edge (2) 
	(2) edge (3)
	(3) edge (4)
	(4) edge (5)
	(5) edge (6)
	(6) edge (1);
	
\draw(-2,0) node (1) {} ++(180:1) node (2) {} ++(60:1) node (3) {};
\draw[->] (1) edge (2) (2) edge (3) (3) edge (1);
\draw(-2,3) node (1) {} ++(180:1) node (2) {} ++(300:1) node (3) {};
\draw[->] (1) edge (2) (2) edge (3) (3) edge (1);

\draw (8,1.5) node (1) {} ++(300:1) node (2) {}  ++(0:1) node (3) {} ++(60:1) node (4) {} ++(120:1) node (5){} ++(180:1) node(6){};
\draw[->] (1) edge (2) 
	(2) edge (3)
	(3) edge (4)
	(4) edge (5)
	(5) edge (6)
	(6) edge (1);
	
\draw(7,0) node (1) {} ++(0:1) node (2) {} ++(120:1) node (3) {};
\draw[->] (1) edge (2) (2) edge (3) (3) edge (1);
\draw(7,3) node (1) {} ++(0:1) node (2) {} ++(240:1) node (3) {};
\draw[->] (1) edge (2) (2) edge (3) (3) edge (1);
\end{tikzpicture}
\caption{$G(3,53,1)$.} \label{fig:G(3,53,1)}
\end{figure}

\subsection{Weights}
Each vertex also carries with it a \emph{weight}, which is its exact degree over $\bbf_p$. That is, for $a \in G(\ell,p,n)$, 
\begin{align*}
\weight(a) = [\bbf_p(a)\colon \bbf_p].
\end{align*}
It follows immediately that $G(\ell,p,\weight(a))$ is the smallest subgraph of this type containing $a$. Furthermore, $\weight(a)$ is also the smallest power for which 
\begin{align*}
(\ord_{\bbf_{p^{2n}}^\times}\alpha )\mid p^{\weight(a)}-1 \quad \text{ or } \quad (\ord_{\bbf_{p^{2n}}^\times}\alpha )\mid p^{\weight(a)}+1,
\end{align*}
where $a = \alpha + \alpha\inv$ as before. In Section 3, the weights of the preperiodic vertices will play an important role, so we are particularly interested in the $\ell$-valuation of $p^{2n}-1$. We compute this valuation now. 

\begin{lem}\label{lem:valuation} 
Let $\mu = \ord_{(\bbz/\ell\bbz)^\times/(\pm1)}p$, and $\nu_\ell$ denote the usual $\ell$-valuation.
\begin{align*}
\nu_\ell(p^{2n}-1) = 
\begin{cases}
\nu_\ell(p^{2\mu}-1)+\nu_\ell(n) & \text{ if } \mu \mid n,\\
0 & \text{ otherwise.}
\end{cases}
\end{align*}
\end{lem}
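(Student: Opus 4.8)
The plan is to reduce the computation to the Lifting-the-Exponent (LTE) lemma, after first pinning down exactly when $\ell$ divides $p^{2n}-1$. Note at the outset that $\ell\nmid p$, since $p$ and $\ell$ are distinct primes, so that orders modulo $\ell$ make sense and LTE is available.

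The first step is the equivalence: $\ell\mid p^{2n}-1$ if and only if $\mu\mid n$. If $p^{2n}\equiv1\pmod\ell$, then $p^n$ is a square root of $1$ modulo the prime $\ell$, hence $p^n\equiv\pm1\pmod\ell$; conversely $p^n\equiv\pm1$ forces $p^{2n}\equiv1$. But $p^n\equiv\pm1\pmod\ell$ says precisely that the image of $p$ in $(\bbz/\ell\bbz)^\times/(\pm1)$ is killed by $n$, i.e.\ $\mu\mid n$. In particular, when $\mu\nmid n$ we get $\nu_\ell(p^{2n}-1)=0$, which is the second branch of the formula.

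For the main branch, assume $\mu\mid n$ and write $n=\mu m$, so that $p^{2n}-1=(p^{2\mu})^m-1$. Since $p^\mu\equiv\pm1\pmod\ell$ we have $\ell\mid p^{2\mu}-1$. When $\ell$ is odd, LTE applies and gives
\[
\nu_\ell(p^{2n}-1)=\nu_\ell(p^{2\mu}-1)+\nu_\ell(m)=\nu_\ell(p^{2\mu}-1)+\nu_\ell(n)-\nu_\ell(\mu).
\]
Here $\nu_\ell(\mu)=0$: indeed $\mu$ is the order of an element of $(\bbz/\ell\bbz)^\times/(\pm1)$, a group of order $(\ell-1)/2<\ell$, so $\mu<\ell$. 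This yields the stated formula. When $\ell=2$ the group $(\bbz/2\bbz)^\times/(\pm1)$ is trivial, so $\mu=1$ and the hypothesis $\mu\mid n$ holds automatically; in that case I would instead use the $2$-adic form of LTE, $\nu_2(p^{2n}-1)=\nu_2(p-1)+\nu_2(p+1)+\nu_2(n)$ for odd $p$, and note that its right-hand side equals $\nu_2(p^2-1)+\nu_2(n)=\nu_2(p^{2\mu}-1)+\nu_2(n)$.

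The argument is short, and I expect no serious obstacle: the only points requiring attention are the mild case distinction between $\ell=2$ and $\ell$ odd (dictated by the two shapes of the LTE lemma) and the remark that $\mu<\ell$, which is what makes $\nu_\ell(\mu)=0$ and thereby the clean formula possible.
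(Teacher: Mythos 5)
Your proof is correct and follows essentially the same route as the paper's: both reduce the divisibility criterion to $p^n\equiv\pm1\pmod\ell$ (i.e.\ $\mu\mid n$), then write $p^{2n}-1=(p^{2\mu})^{n/\mu}-1$ and lift the exponent from the base $p^{2\mu}\equiv1\pmod\ell$, finally noting $\nu_\ell(\mu)=0$. The only cosmetic difference is that you invoke LTE by name and treat $\ell=2$ separately, whereas the paper carries out the exponent-lifting step by hand in a single argument (which is valid at $\ell=2$ as well, since $p^{2\mu}\equiv1\pmod 4$).
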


\begin{proof} 
Note that $p^{2n}-1 = (p^n+1)(p^n-1)$ is divisible by $\ell$ if and only if $p^n \equiv \pm1 \pmod \ell$, if and only if $\mu$ divides $n$. Suppose then that $\mu$ divides $n$ and write $n = k\mu$ for some integer $k$. Then 
\begin{align*}
p^{2n}-1 = p^{2k\mu}-1 = (p^{2\mu})^k-1 = (p^{2\mu}-1)\big((p^{2\mu})^{k-1} + (p^{2\mu})^{k-2} + \cdots +(p^{2\mu})^0\big).
\end{align*}
Since $p^{2\mu}\equiv 1 \pmod\ell$, we see that $\nu_\ell(p^{2n}-1) = \nu_\ell(p^{2\mu}-1) + \nu_\ell(k)$. Moreover, $\nu_\ell(\mu)=0$ since $\mu$ is a divisor of $\ell-1$. Thus $\nu_\ell(k) = \nu_\ell(k\mu) = \nu_\ell(n)$, completing the proof.
\end{proof}

One of the consequences of this result is the following. For the remainder of the paper, let $D_1, D_2 \in \{p^\mu-1,p^\mu+1\}$ be the number with the larger and smaller $\ell$-valuation, respectively, and fix a cycle $C$ in $G(\ell,p,1)$. The vertices in $C$ correspond to a divisor $d$ of $D_1$ or $D_2$, and for now, assume that $d$ divides $D_1$. Then in $G(\ell,p,\mu)$, the trees of preperiodic vertices in component containing $C$ have height $\nu_\ell(D_1)$. Furthermore, the weight of each preperiodic vertex is $\mu$. Moreover, in $G(\ell,p,\mu\ell^k)$ for $k \ge 1$, the trees of preperiodic vertices in component containing $C$ have height $\nu_\ell(D_1) + k$. It follows by induction on $k$, the vertices of preperiod $\nu_\ell(D_1)+k$ have weight $\mu\ell^k$. So what we see is that for the cycles in $G(\ell,p,1)$ corresponding to divisors of $D_1$, there is an initial spurt in the heights of the trees when moving from $\bbf_p$ to $\bbf_{p^\mu}$. From then on, moving from the field of order $p^{\mu\ell^k}$ to the field of order $p^{\mu\ell^{k+1}}$ only increases the heights of the trees by 1.

On the other hand, for cycles corresponding to divisors of $D_2$, any large jump in the height of the trees will come in the extension $\bbf_{p^{2\mu}}$, since $\nu_\ell(p^{2\mu}-1) = \nu_\ell(D_1D_2)$ and $\nu_\ell(D_1)$ may be large. Note that $\nu_\ell(D_2) = 0$ if $\ell$ is odd, otherwise $\nu_2(D_2)=1$. Any preperiodic vertices added to this component when moving from $\bbf_p$ to $\bbf_{p^{2\mu}}$ have weight $2\mu$. Following that, moving from $\bbf_{p^{2\mu\ell^k}}$ to $\bbf_{p^{2\mu\ell^{k+1}}}$ increases the height of the trees by 1. We have proven the following.

\begin{thm} \label{th:weights}
Let $a \in G(\ell,p,2\mu\ell^n)$ and assume that $a$ is strictly preperiodic. The weight of $a$ is determined by $\rho=\pper_{T_\ell,\bbf_{p^{2\mu\ell^n}}}(a)$ as follows.
\begin{enumerate}
\item If $\ell$ is odd and $a$ is connected to a cycle corresponding to a divisor of $D_1$,
\begin{align*}
\weight(a) = 
\begin{cases}
\mu & \text{ if } 0 < \rho \le \nu_\ell(D_1),\\
\mu\ell^k & \text{ if } \rho = \nu_\ell(D_1) + k \text{ for } k \in \{1,\ldots,n\}.
\end{cases}
\end{align*}

\item If $\ell$ is odd and $a$ is connected to a cycle corresponding to a divisor of $D_2$,
\begin{align*}
\weight(a) = 
\begin{cases}
2\mu & \text{ if } 0 < \rho \le \nu_\ell(D_1),\\
2\mu\ell^k & \text{ if } \rho = \nu_\ell(D_1) + k \text{ for } k \in \{1,\ldots,n\}.
\end{cases}
\end{align*}
\item If $\ell=2$ and $a$ is connected to a cycle corresponding to a divisor of $D_2$,
\begin{align*}
\weight(a) = 
\begin{cases}
1 & \text{ if } \rho = 1,\\
2 & \text{ if } 1 < \rho \le \nu_2(D_1),\\
2^k & \text{ if } \rho = \nu_2(D_1) + k \text{ for } k \in \{1,\ldots,n-1\}.
\end{cases}
\end{align*}
\end{enumerate}
\end{thm}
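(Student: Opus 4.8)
The plan is to turn the weight into a multiplicative order and then run, uniformly, the analysis sketched in the ``Weights'' subsection. The same elementary computation used in the proof of Proposition~\ref{prop:preperiod/period calculation}, run with the Frobenius $x \mapsto x^{p^m}$ in place of $T_\ell^{\pi+\rho}$, shows that $a = \alpha + \alpha\inv$ lies in $\bbf_{p^m}$ exactly when $\alpha^{p^m} \in \{\alpha, \alpha\inv\}$, i.e.\ exactly when $p^m \equiv \pm 1 \pmod{\ord\alpha}$; this is the reformulation already recorded there. Hence $\weight(a)$ is the least $m \ge 1$ with $p^m \equiv \pm1 \pmod{\ord\alpha}$. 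For a strictly preperiodic $a$ with $\pper(a) = \rho$ attached to a cycle corresponding to a divisor $d$ of $p^\mu - 1$ or $p^\mu + 1$, Proposition~\ref{prop:preperiod/period calculation} gives $\ord\alpha = \ell^\rho d$ with $\gcd(\ell, d) = 1$. Since $\rho \ge 1$, $\ell$ divides $\ord\alpha$, so $p^{\weight(a)} \equiv \pm1 \pmod\ell$ and therefore $\mu \mid \weight(a)$; thus it is enough to look for the least valid $m$ of the form $m = \mu t$. By the Chinese Remainder Theorem, $m = \mu t$ is valid iff some single sign $\varepsilon \in \{\pm1\}$ satisfies both $p^{\mu t} \equiv \varepsilon \pmod d$ and $p^{\mu t} \equiv \varepsilon \pmod{\ell^\rho}$.

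Next I would describe the two congruences. Write $\varepsilon_d$ for the sign with $p^\mu \equiv \varepsilon_d \pmod d$ (well-defined: $d$ divides exactly one of $p^\mu \mp 1$). Then $p^{\mu t} \equiv \varepsilon_d^{\,t} \pmod d$, so modulo $d$ the sign $-1$ is attainable only if $\varepsilon_d = -1$, and then only for odd $t$. For the $\ell$-part, set $\lambda = \nu_\ell(p^{2\mu}-1) = \nu_\ell(D_1)$ and let $\varepsilon_1$ be the sign with $D_1 = p^\mu - \varepsilon_1$ (so $\ell^\lambda \mid p^\mu - \varepsilon_1$). Combining Lemma~\ref{lem:valuation} with the lifting-the-exponent lemma, one gets, for $\ell$ odd, that $\nu_\ell(p^{\mu t} - 1) = \lambda + \nu_\ell(t)$ when $t$ is even or $\varepsilon_1 = 1$, that $\nu_\ell(p^{\mu t} + 1) = \lambda + \nu_\ell(t)$ when $t$ is odd and $\varepsilon_1 = -1$, and that the $\ell$-valuation is $0$ otherwise. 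Consequently, modulo $\ell^\rho$ the sign $-1$ is attainable only if $\varepsilon_1 = -1$, again only for odd $t$, while the sign $+1$ is attained precisely for those $t$ with $\nu_\ell(t) \ge \rho - \lambda$ that are even (or have $\varepsilon_1 = 1$).

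Then I would combine the two. If $d \mid D_1$, then $\varepsilon_d = \varepsilon_1$, so whichever of $\pm1$ equals $\varepsilon_1$ is attained on both sides at the common odd value $t = \ell^{\max(0,\,\rho-\lambda)}$: this gives $\weight(a) = \mu$ when $0 < \rho \le \lambda$ and $\weight(a) = \mu\ell^k$ when $\rho = \lambda + k$, the matching lower bounds coming from $\mu \mid \weight(a)$ and, for the second, $\ell^k \mid \weight(a)$. If $d \mid D_2$, then $\varepsilon_d = -\varepsilon_1$, so exactly one of the $d$-side and the $\ell$-side can realize the sign $-1$ and they can never realize it together; the common sign is therefore forced to be $+1$, which by the previous paragraph requires $t$ to be even, giving $\weight(a) = 2\mu$ when $0 < \rho \le \lambda$ and $\weight(a) = \operatorname{lcm}(\mu\ell^k, 2\mu) = 2\mu\ell^k$ when $\rho = \lambda + k$; the restriction $k \le n$ in parts (1) and (2) is exactly the condition that the weight divide $2\mu\ell^n$. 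The case $\ell = 2$ is handled by the same mechanism, now with $\mu = 1$, with the $2$-adic lifting-the-exponent formula (whose extra $+1$ is what produces the entry $2^k$ rather than $2^{k+1}$), with the layer of strictly preperiodic vertices already present in $G(2,p,1)$ (giving the weight-$1$, preperiod-$1$ entry), and with the special points $2$, $-2$, $0$ and the binary tree rooted at $0$ of Theorem~\ref{th:structure of graph}(4) excluded from the $d>2$ discussion; this yields the table in part (3). (Alternatively, the $\mu\ell^k$ values can be obtained by induction on $k$ from the fact that passing from $G(\ell,p,\mu\ell^k)$ to $G(\ell,p,\mu\ell^{k+1})$ adds exactly one new layer to the trees, whose vertices, being new, must have weight $\mu\ell^{k+1}$ by the divisibility constraints --- essentially the argument in the discussion preceding the theorem.)

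The step I expect to be the main obstacle is the sign-and-parity bookkeeping inside the Chinese Remainder reduction: verifying that for a $D_2$-cycle the sign $-1$ can never be realized simultaneously modulo $d$ and modulo $\ell^\rho$ --- because the large $\ell$-valuation lives entirely on the $\varepsilon_1$-side while $d \mid p^\mu + \varepsilon_1$ --- so that the common sign is pinned to $+1$ and the exponent is forced to an even multiple of $\mu$; this is precisely the phenomenon that doubles the weight, in contrast to the $D_1$-case where the signs cooperate. The subsidiary technical point is the lifting-the-exponent refinement that turns Lemma~\ref{lem:valuation} (a statement about $p^{2n}-1$) into the separate $\ell$-valuations of $p^{\mu\ell^j} - 1$ and $p^{\mu\ell^j} + 1$, together with its $2$-adic counterpart when $\ell = 2$.
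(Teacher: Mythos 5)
Your proof is correct, but it takes a genuinely different route from the paper's. The paper's proof of Theorem~\ref{th:weights} is the discussion immediately preceding its statement: starting from the same reformulation you use (namely that $\weight(a)$ is the least $m$ with $\ord\alpha \mid p^m-1$ or $\ord\alpha \mid p^m+1$), it never computes that least $m$ directly. Instead it applies Theorem~\ref{th:structure of graph} over each field in the tower $\bbf_{p^{\mu}} \subset \bbf_{p^{\mu\ell}} \subset \cdots$ (respectively $\bbf_{p^{2\mu}} \subset \bbf_{p^{2\mu\ell}} \subset \cdots$ for $D_2$-components) and uses Lemma~\ref{lem:valuation} to see that each step up the tower adds exactly one new layer to the trees; a vertex in a new layer lies in the new field but, by the tree heights over the smaller fields and the divisibility constraint $\mu \mid \weight(a)$, in no admissible proper subfield, so its weight is forced. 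You instead compute the least $m$ head-on: split $\ord\alpha = \ell^\rho d$ by CRT into a sign condition modulo $d$ and a valuation condition modulo $\ell^\rho$, and resolve the latter with lifting-the-exponent. Both are sound. Your version makes the $D_1$/$D_2$ dichotomy completely transparent --- the doubling of the weight in the $D_2$ case is exactly the impossibility of realizing the sign $-1$ simultaneously modulo $d$ and modulo $\ell^\rho$, which forces an even exponent --- and it explains the $\ell=2$ anomaly ($2^k$ rather than $2^{k+1}$) by the extra summand in the $2$-adic LTE. The paper's version avoids LTE entirely by recycling the already-proved structure theorem, and its ``one new layer per field extension'' picture is what gets reused in the proof of Theorem~\ref{th:main}. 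One small caution: your identity $\nu_\ell(p^{2\mu}-1) = \nu_\ell(D_1)$ holds only for $\ell$ odd (for $\ell=2$ the left-hand side is $\nu_2(D_1)+1$); since you invoke it only in the odd case and adjust for $\ell=2$ separately, this is a slip of wording rather than a gap. The alternative you sketch in your final parenthetical is, in fact, essentially the paper's own argument.
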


\begin{example}
The weight of a preperiodic vertex depends on its preperiod and whether the cycle its component corresponds to a divisor of $D_1$ or $D_2$. Returning to the previous example where $\ell=3$ and $p=53$, we have $\mu = 1$, $D_1 = 54$ and $D_2 = 52$. As we saw in Figure \ref{fig:G(3,53,1)}, the cycles corresponding to divisors of $54$ have trees of height 3, and the cycles corresponding to divisors of $52$ have trees of height 0. Over the finite field of order $53^{18}$, all cycles have trees of height 5. A table of weights is given in Figure \ref{table:weights}. Also see Figure \ref{fig:weights}.
\end{example}

\begin{figure}[!ht]
\centering
\begin{tabular}{|c|c|c|}
\hline
Preperiod & Divisor of $D_1$ & Divisor of $D_2$ \Tstrut\Bstrut\\
\hline\hline
0&1&1\\
1&1&2\\
2&1&2\\
3&1&2\\
4&3&6\\
5&9&18\\
\hline
\end{tabular}
\caption{Table of weights for $G(3,53,18)$.}\label{table:weights}
\end{figure}

\begin{example}
The graph $T_2(x) = x^2-2$ over $\bbf_{3^4}$ is shown in Figure \ref{fig:G(2,3,4)}.
\end{example}

\begin{figure}[!ht]
\begin{center}
\begin{tikzpicture}[>=latex,every node/.style={circle,fill=Green,outer sep = 1mm}]
\draw[label distance = 2mm] (0,0) node (one) {}
	++(290:1) node[label = below:2] (two) {}
	(two) edge[loop below] (two)
	(one) edge[->>] (two);
\foreach \x in {-1,0,1} {
	\draw[->] (180+60*\x:1) node (0) {}
		(0) -- (one);
		\foreach \y in {-1,0,1} {
			\draw[->] (180+60*\x+20*\y:2) node (1) {}
				(1) -- (0);
			\foreach \z in {-1,0,1} {
				\draw[->] (180+60*\x+20*\y+20*\z/3:3) node[fill=red,inner sep = 1mm] (2) {}
					(2) -- (1);
				\foreach \w in {-1,0,1} {
					\draw[->] (180+60*\x+20*\y+20*\z/3+20*\w/9:4) node[fill=blue,inner sep = .5mm] (3) {}
						(3) -- (2);
				}
			}
		}
	}
	\draw (5,0) node[fill=none] (a) {};
	\foreach \x in {-1,1} {
		\draw[label distance = 2mm,->] (a)++(90+\x*90:.5) node[Brown] (0) {}
			(a)+(-90:.75) node[Green,label=below:0] (1) {} 
			(0) -- (1)
			(1) edge[loop below] (1);
		\foreach \y in {-1,0,1} {
			\draw[->] (a)++(90+\x*90+\y*60:1.4) node[Brown] (1) {}
				(1) -- (0);
			\foreach \z in {-1,0,1} {
				\draw[->] (a)++(90+\x*90+\y*60+\z*20:2.3) node[Brown] (2) {}
					(2) -- (1);
				\foreach \w in {-1,0,1} {
					\draw[->] (a)++(90+\x*90+\y*60+\z*20+\w*20/3:3.2) node[Violet,inner sep = 1mm] (3) {}
						(3) -- (2);
					\foreach \p in {-1,0,1} {
						\draw[->] (a)++(90+\x*90+\y*60+\z*20+\w*20/3+\p*20/9:4.1) node[Navy,inner sep = .5mm] (4) {}
							(4) -- (3);
					}
				}
			}
		}
	}
	\draw (10,-2.5) node[fill=none,Green] (f) {\Large$\bbf_5$}
			++(0,2) node[fill=none,red] (f3) {\Large$\bbf_{5^3}$}
			++(0,2) node[fill=none,blue] (f9) {\Large$\bbf_{5^9}$}
			++(1.5,-3) node[fill=none,Brown] (f2) {\Large$\bbf_{5^2}$}
			++(0,2) node[fill=none,Violet] (f6) {\Large$\bbf_{5^6}$}
			++(0,2) node[fill=none,Navy] (f18) {\Large$\bbf_{5^{18}}$}
			(f)--(f3)--(f9)
			(f)--(f2)--(f6)--(f18)
			(f3)--(f6)
			(f9)--(f18);	
\end{tikzpicture}
\end{center}
\caption{Selected components of $G(3,53,18)$ colored by weight.}\label{fig:weights}
\end{figure}

\begin{figure}[!ht]
\begin{tabular}{|c|c|c|c|c|c|}
\hline
\multicolumn{6}{|c|}{$\ell=2, p = 3, n=4$} \\
\hline\hline
Divisors of 80 & Number of points in $\bbf_{3^4}$ & Period & Preperiod & Weight & Cycles of this type\\ \hline
1 & 1 & 1 & 0 & 1 &\multirow{5}{*}1\\
2 & 1 & - & 1 & 1 &\\
4 & 1 & - & 2 & 1 &\\
8 & 2 & - & 3 & 2 &\\
16 & 4 & - & 4 & 4 &\\
\hline
5 & 2 & 2 & 0 & 2 &\multirow{5}{*} 1\\
10 & 2 & - & 1 & 2 &\\
20 & 4 & - & 2 & 4 &\\
40 & 8 & - & 3 & 4 &\\
80 & 16 & - & 4 & 4 &\\
\hline
\hline
Divisors of 82 &&&&&\\
\hline
41 & 20 & 20 & 0 & 4 & \multirow{2}{*}1\\
82 & 20 & - & 1 & 4 &\\
\hline
\end{tabular}

\vspace{3mm} 

\begin{tikzpicture}
[>=latex,grow = up,level distance=10mm, 
every node/.style={circle,outer sep = 1mm}, 
level 1/.style={sibling distance=40mm}, 
level 2/.style={sibling distance=20mm}, 
level 3/.style={sibling distance=10mm},
level 4/.style={sibling distance=5mm,nodes={fill=blue}}]
\node[fill=Green,label = left:2] (2) {}
	child[<-] {node[fill=Green,label=left:$-2$] {}
		child[<<-,level 3/.style={<-,sibling distance=10mm}] {node [fill=Green,label=left:0] {}
			child {node[fill=red]{}
				child{node{}}
				child{node{}}
				}
			child {node[fill=red]{}
				child{node{}}
				child{node{}}
				}
			}
		};
\draw (2) edge[loop below] (2);
\node[fill=red] at(3.25,0) (a){}
	child[<-] {node[fill=red]{}
		child {node[fill=blue]{}
			child {node[fill=blue]{}
				child {node[fill=blue]{}}
				child {node[fill=blue]{}}
				}
			child {node[fill=blue]{}
				child {node[fill=blue]{}}
				child {node[fill=blue]{}}
				}
			}
		child {node[fill=blue]{}
			child {node[fill=blue]{}
				child {node[fill=blue]{}}
				child {node[fill=blue]{}}
				}
			child {node[fill=blue]{}
				child {node[fill=blue]{}}
				child {node[fill=blue]{}}
				}
			}
		};
\node[fill=red] at(7.25,0) (b){}
	child[<-] {node[fill=red]{}
		child {node[fill=blue]{}
			child {node[fill=blue]{}
				child {node[fill=blue]{}}
				child {node[fill=blue]{}}
				}
			child {node[fill=blue]{}
				child {node[fill=blue]{}}
				child {node[fill=blue]{}}
				}
			}
		child {node[fill=blue]{}
			child {node[fill=blue]{}
				child {node[fill=blue]{}}
				child {node[fill=blue]{}}
				}
			child {node[fill=blue]{}
				child {node[fill=blue]{}}
				child {node[fill=blue]{}}
				}
			}
		};
\draw[->] (a) to (b);
\draw[->] (b.south) -- ++(0,-.3) -| (a.south);

\draw (9.6,3.5) rectangle (12.5,.5);
\node[fill = Green,label=right:- weight 1] at (10.1,3) {};
\node[fill = red,label=right:- weight 2] at (10.1,2) {};
\node[fill = blue,label=right:- weight 4] at (10.1,1) {};
\end{tikzpicture}

\vspace{2mm}
\begin{tikzpicture}
[>=latex,scale=.7,grow = up,level distance=14mm, 
every node/.style={circle, fill = blue,outer sep = 1mm}]
\foreach \x in {0,1,...,19}
	\node (\x) at (\x,0) {} child[<-] {node{}};
\foreach \x in {0,1,...,18}
	\draw[->] (\x+.3,0) to (\x+.75,0);
\draw[->] (19.south) -- ++(0,-.5) -| (0.south);
\end{tikzpicture}

\caption{$G(2,3,4)$.}
\label{fig:G(2,3,4)}
\end{figure}

\begin{remark}
The map $x \mapsto x + x\inv$, which provided a key connection between elements of $\bbf_{p^{2n}}^\times$ and $\bbf_{p^n}$, is quite interesting in its own right. For this rational map, one can construct a  graph over the projective space $\bbp^1(\bbf_q)$ in the same way. Ugolini showed that when $\bbf_q$ is a finite field of characteristic 2, 3, or 5, the components of these graphs are also radially symmetric \cite{Ugo12,Ugo13}.
\end{remark}

\section{\large Decomposition of Primes.} 
\subsection{Main result}
We now use the theory of the graphs developed in the previous section to describe how primes not dividing the discriminant of $T_\ell^n(x)-t$ decompose in the Chebyshev radical extensions. If $p$ does not divide the discriminant, the decomposition of $p\bbz$ in $K_n$ is given by the factorization of $T_\ell^n(x)-t$ modulo $p$, which follows immediately from our previous results.

\begin{thm} \label{th:main}
Let $\ell$ and $p$ be distinct primes, and let $t\in \bbz$ such that every iterate $T_\ell^n(x)-t$ is irreducible. Let $\mu, D_1$, and $D_2$ be defined as before, and put $v = \nu_\ell(D_1)$ and $\pper(\,\overline t\,) = \rho$, where $\overline t$ denotes the reduction of $t$ modulo $p$.

\textbf{Case 1:} $\ell$ is odd.
\begin{enumerate}
\item If $\rho>0$, then $T_\ell^n(x)-t$ factors into 
\begin{enumerate}
\item $\ell^n$ factors of degree $1$ if $1 \le n \le v-\rho$, or
\item $\ell^{v-\rho}$ factors of degree $\ell^{n-v+\rho}$ if $n > v-\rho$.
\end{enumerate}
In particular, if $\rho=v$, then $T_\ell^n(x)-t$ is irreducible modulo $p$.

\item If $\overline t \in \{-2,2\}$, then $T_\ell^n(x)-t$ factors into 
\begin{enumerate}
\item one degree $1$ factor, and
\item $\sum_{k=0}^{n-1}\frac{\ell-1}{2\mu}\ell^k$ additional factors of degree $\mu$ if $1 \le n \le v$, and
\item $\frac{\ell-1}{2\mu}\ell^{v-1}$ additional factors of degree $\mu\ell^k$ for each $k \in \{1,2,\ldots, n-v\}$.
\end{enumerate}
Each of the factors from parts (b) and (c) have multiplicity $2$.

\item If $\overline t\not\in\{-2,2\}$ is periodic and $\overline t$ corresponds to a divisor of $D_1$, then $T_\ell^n(x)-t$ factors into
\begin{enumerate}
\item one degree $1$ factor, and
\item $\sum_{k=0}^{n-1}\frac{\ell-1}{\mu}\ell^k$ additional factors of degree $\mu$ if $1 \le n \le v$, and
\item $\frac{\ell-1}{\mu}\ell^{v-1}$ additional factors of degree $\mu\ell^k$ for each $k \in \{1,2,\ldots, n-v\}$.
\end{enumerate}

\item If $\overline t \not\in\{-2,2\}$ is periodic and corresponds to a divisor of $D_2$, then $T_\ell^n(x)-t$ factors into 
\begin{enumerate}
\item one degree $1$ factor, and
\item $\sum_{k=0}^{n-1}\frac{\ell-1}{2\mu}\ell^k$ additional factors of degree $2\mu$ if $1 \le n \le v$, and
\item $\frac{\ell-1}{2\mu}\ell^{v-1}$ additional factors of degree $2\mu\ell^k$ for each $k \in \{1,2,\ldots, n-v\}$.
\end{enumerate}
\end{enumerate}

\textbf{Case 2:} $\ell=2$.
\begin{enumerate}
\item If $\rho>0$ and $\overline t \neq -2$ is connected to a cycle corresponding to a divisor of $D_1$, then $T_2^n(x)-t$ factors into 
\begin{enumerate}
\item $2^n$ factors of degree 1 if $1 \le n \le v-\rho$, or
\item $2^{v-\rho}$ factors of degree $2^{n-v+\rho}$ if $n > v-\rho$.
\end{enumerate}
In particular, if $\rho=v$, then $T_2^n(x)-t$ is irreducible modulo $p$.

\item If $\rho>0$ and $\overline t$ is connected to a cycle corresponding to a divisor of $D_2$, then $T_2^n(x)-t$ factors into 
\begin{enumerate}
\item $2^{n-1}$ factors of degree $2$ if $1 \le n \le v-\rho$, or
\item $2^{v-\rho}$ factors of degree $2^{n-v+\rho}$ if $n > v-\rho$.
\end{enumerate}

\item If $\overline t = -2$, then $T_2^n(x)-t$ factors into 
\begin{enumerate}
\item $2^{n-1}$ factors of degree 1 and multiplicity 2 if $1 \le n \le v-\rho$, or
\item $2^{v-\rho-1}$ factors of degree $2^{n-v+\rho}$ and multiplicity 2 if $n > v-\rho$.
\end{enumerate}

\item If $\overline t = 2$, then $T_2^n(x)-t$ factors as $(x-2)(x+2)(T_2^{n-1}(x)+2)$, which is given by (3).

\item If $\overline t\neq 2$ is periodic and $\overline t$ corresponds to a divisor of $D_1$, then $T_2^n(x)-t$ factors into
\begin{enumerate}
\item one degree $1$ factor, and
\item $\sum_{k=0}^{n-1}2^k$ additional factors of degree 1 if $1 \le n \le v$, and
\item $2^{v-1}$ additional factors of degree $2^k$ for each $k \in \{1,2,\ldots, n-v\}$.
\end{enumerate}

\item If $\overline t\neq 2$ is periodic and $\overline t$ corresponds to a divisor of $D_2$, then $T_2^n(x)-t$ factors into
\begin{enumerate}
\item two degree $1$ factors, and
\item $\sum_{k=0}^{n-2}2^k$ additional factors of degree $2$ if $1 \le n \le v$, and
\item $2^{v-2}$ additional factors of degree $2^{k+1}$ for each $k \in \{1,2,\ldots, n-v\}$.
\end{enumerate}
\end{enumerate}
\end{thm}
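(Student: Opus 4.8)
The plan is to deduce the factorization of $T_\ell^n(x)-t$ modulo $p$ directly from the structure of the graphs $G(\ell,p,m)$ established in Theorems \ref{th:structure of graph}, \ref{th:point count}, and \ref{th:weights}. The organizing principle is the correspondence between irreducible factors of $T_\ell^n(x)-t$ over $\bbf_p$ and Frobenius orbits of roots: an irreducible factor of degree $f$ corresponds to a set of $f$ roots $\theta$ living in $\bbf_{p^f}$ (and no smaller field) that Frobenius permutes cyclically. Since the roots of $T_\ell^n(x)-t$ are precisely the $\theta \in \overline{\bbf_p}$ with $T_\ell^n(\theta) = \overline t$ --- that is, the preimages of $\overline t$ under $n$-fold iteration of $T_\ell$ --- each such root is a vertex of some graph $G(\ell,p,m)$ sitting at distance $n$ above $\overline t$ in the backward orbit tree, and its degree over $\bbf_p$ is exactly its weight. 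So the degree of the irreducible factor containing $\theta$ equals $\weight(\theta)$, and the multiplicity of that factor is $1$ unless $\theta$ is one of the exceptional critical-value vertices ($\pm 2$ when $\ell$ is odd, or $0$ when $\ell=2$), where the multiplicity is $2$ by the ramification statement at the end of the proof of Theorem \ref{th:structure of graph}.

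The concrete steps are as follows. First I would fix $\overline t$ and locate it in the graph: it has preperiod $\rho = \pper(\overline t)$ and lies in the component of some cycle $C$, which corresponds to a divisor $d$ of either $D_1$ or $D_2$ (or $\overline t \in \{\pm 2, 0\}$). Second, I would count the preimages of $\overline t$ under $T_\ell^n$ by walking $n$ steps up the backward orbit: each step multiplies the count by $\ell$ at vertices of indegree $\ell$, by $(\ell-1)$ or $(\ell-1)/2$ at the first step off a cycle, etc., exactly as tabulated in Theorem \ref{th:point count} --- here one must distinguish $n \le v - \rho$ (the root $\overline t$ is still ``deep'' enough that all $n$ preimage-steps stay inside trees of full indegree, giving $\ell^n$ roots) from $n > v-\rho$ (eventually the preperiod of the preimages saturates and the backward orbit enters the new layers created only in the larger fields $\bbf_{p^{2\mu\ell^k}}$). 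Third, for each preimage $\theta$ I would read off $\weight(\theta)$ from Theorem \ref{th:weights} as a function of $\pper(\theta) = \rho + (\text{number of steps up a tree})$, using Lemma \ref{lem:valuation} to control the $\ell$-valuation of $p^{2m}-1$. Fourth, I would group the $\ell^n$ roots by weight: roots of a common weight $f$ partition into Frobenius orbits of size $f$, so they contribute (number of such roots)$/f$ irreducible factors of degree $f$; assembling these counts over all the weight strata reproduces the tables in the six (respectively six) sub-cases of the theorem. Fifth, I would handle the multiplicities: in cases 1(2) and 2(3)--(4) the root $\overline t$ (or a preimage) is a critical value, so the squared factor $(x\mp 2)^2$ or $(x-0)^2$ propagates through iteration --- this is where the ``multiplicity $2$'' clauses come from, and one checks it via $T_\ell(x) \mp 2 = (x\mp 2)\cdot(\text{square})$ for $\ell$ odd and $T_2(x)+2 = x^2$.

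The main obstacle I expect is the bookkeeping in the ``saturation'' regime $n > v-\rho$ combined with the dichotomy between cycles over $D_1$ and over $D_2$: one must track, for each $k$, exactly how many preimages of $\overline t$ first acquire preperiod $v + k$ (equivalently, which of them only appear in $\bbf_{p^{2\mu\ell^{k}}}$ and not before), and pair this with the weight formula, whose two branches ($\mu$ vs.\ $\mu\ell^k$, or $2\mu$ vs.\ $2\mu\ell^k$) change precisely at $\rho = v$. Getting the boundary indices right --- why it is $\ell^{v-\rho}$ factors and not $\ell^{v-\rho\pm 1}$, why the degree is $\mu\ell^{k}$ for $k \in \{1,\dots,n-v\}$ rather than shifted by the extra distance $\rho$ the root already sits off the cycle --- requires careful use of the fact (from the discussion preceding Theorem \ref{th:weights}) that the ``initial spurt'' in tree height happens exactly once, in passing from $\bbf_p$ to $\bbf_{p^\mu}$ (or $\bbf_{p^{2\mu}}$), after which heights grow by $1$ per factor of $\ell$ in the field degree. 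The $\ell=2$ case needs separate care near $\overline t \in \{0, \pm 2\}$ because the edge $(0,-2)$ and the fixed point $2$ make the local picture at the bottom of the tower irregular, but case 2(4) is explicitly reduced to case 2(3), and the remaining sub-cases mirror the odd-$\ell$ argument with $D_2$ carrying the extra factor of $2$.
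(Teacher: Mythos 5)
Your proposal is correct and follows essentially the same route as the paper: identify the roots of $T_\ell^n(x)-t$ modulo $p$ with the vertices of $G(\ell,p,2\mu\ell^n)$ at the start of length-$n$ paths terminating at $\overline t$, determine their preperiods by back-tracking, read off their weights (i.e.\ degrees over $\bbf_p$) from Theorem \ref{th:weights}, and group them into irreducible factors accordingly, with the multiplicity-$2$ clauses coming from the critical values $\pm 2$ (and $0$ when $\ell=2$). Your explicit mention of Frobenius orbits and the division by $f$ to count factors of degree $f$ only makes precise what the paper leaves implicit.
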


\begin{proof}
The statement is complex, but the method of proof is simple. Let $G = G(\ell,p,2\mu\ell^n)$. Certainly, $a$ is a root of $T_\ell^n(x)-t$ modulo $p$ if and only if $G$ contains a path of length $n$ from $a$ to $\overline t$. Moreover, if $a$ is a root of $T_\ell^n(x)-t$ modulo $p$, then the factorization of $T_\ell^n(x)-t$ modulo $p$ contains a factor of degree $\weight(a)$. By Theorem \ref{th:structure of graph}, the graph $G$ contains $\ell^n$ paths of length $n$ that terminate at $\overline t$ provided $\overline t$ is not one of the critical values $\{-2,2\}$. If $\overline t \in \{-2,2\}$, then there are $\ell^n/2$ such paths. In either case, the preperiod of each root is easily determined by simply back-tracking along each of the paths terminating at $\overline t$. For example, if $\overline t \not\in\{-2,2\}$ and $\pper(\,\overline t\,) > 0$, then the paths of length $n$ terminating at $\overline t$ originate at vertices of preperiod $\pper(\,\overline t\,)+n$. If $\overline t \not\in\{-2,2\}$ and $\pper(\,\overline t\,)=0$, then there are $(\ell-1)\ell^{k-1}$ paths of length $n$ originating from vertices of preperiod $k$ for each $k \in \{1, \ldots, n\}$, and one path originating from a vertex of preperiod 0. Once the preperiods of the roots are known, their weights are given by Theorem \ref{th:weights}, and the degrees of the irreducible factors $T_\ell^n(x)-t$ follow immediately. 
\end{proof}

\begin{cor}
Suppose that $\ell$ is an odd prime. Then $T_\ell^n(x)-t$ is reducible modulo $p$ if and only if $T_\ell(x)-t$ has a root modulo $p$.
\end{cor}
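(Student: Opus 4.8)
The plan is to prove the equivalent statement: $T_\ell^n(x)-t$ is \emph{irreducible} modulo $p$ if and only if $T_\ell(x)-t$ has \emph{no} root modulo $p$. The two implications are of very different flavors — one is purely formal, the other is where the graph description and the oddness of $\ell$ are used.

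For ``$T_\ell(x)-t$ has a root $\Rightarrow T_\ell^n(x)-t$ reducible'' I would argue with no input beyond $\deg T_\ell = \ell \ge 2$. If $a_0 \in \bbf_p$ satisfies $T_\ell(a_0)\equiv t \pmod p$, then $Y - a_0$ divides $T_\ell(Y)-T_\ell(a_0)$ in $\bbf_p[Y]$; substituting $Y = T_\ell^{n-1}(x)$ shows that $T_\ell^{n-1}(x)-a_0$ divides $T_\ell^n(x)-T_\ell(a_0)\equiv T_\ell^n(x)-t$ in $\bbf_p[x]$. Since $1 \le \ell^{n-1} < \ell^n$, this is a proper factorization, so $T_\ell^n(x)-t$ is reducible modulo $p$ for every $n \ge 1$. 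This half never uses that $\ell$ is odd.

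For the converse I would work inside $G(\ell,p,1)$. The hypothesis ``$T_\ell(x)-t$ has no root modulo $p$'' says precisely that $\overline t$ has indegree $0$, i.e.\ $\overline t$ is a leaf of one of the preperiodic trees of Theorem~\ref{th:structure of graph}. I would first record the immediate consequences: $\overline t \notin \{-2,2\}$ (these are fixed points, carrying self-loops, hence indegree $\ge 1$), and $\overline t$ is not periodic (a periodic vertex is the image of its cycle-predecessor); hence $\overline t$ is strictly preperiodic and $\rho := \pper_{T_\ell,\bbf_p}(\overline t)\ge 1$, and since $\overline t$ is a leaf, $\rho$ equals the maximal preperiod occurring in its component. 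Now invoke that $\ell$ is odd: then $\ell$ divides at most one of $p-1$ and $p+1$, so at most one of $\lambda^-=\nu_\ell(p-1)$, $\lambda^+=\nu_\ell(p+1)$ is positive, and by Theorem~\ref{th:structure of graph} every component containing a strictly preperiodic vertex has trees of height $\lambda_m-1$ with $\lambda_m=\max\{\lambda^-,\lambda^+\}$; thus every leaf sits at preperiod $\lambda_m$, giving $\rho=\lambda_m$. From $\rho\ge 1$ we get $\lambda_m\ge 1$, which forces $p\equiv\pm1\pmod\ell$, hence $\mu=1$ and $\lambda_m=\nu_\ell(D_1)=v$. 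So $\rho=v>0$, and the ``in particular'' clause of Theorem~\ref{th:main}, Case~1(1), yields that $T_\ell^n(x)-t$ is irreducible modulo $p$ for all $n\ge 1$.

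The one place that deserves care is the pair of identifications in the converse, ``leaf $\Rightarrow \rho=\lambda_m$'' and ``$\lambda_m = v$'': both rest on $\ell$ being odd, which is exactly what makes $\ell$ ramify on only one of the two sides $p\mp1$, so that a single tree-height governs the first-layer graph and it coincides with the quantity $v=\nu_\ell(D_1)$ appearing in Theorem~\ref{th:main}. One should also glance at the degenerate vertices — $\pm2$, the point $0$ (also fixed when $\ell$ is odd), and any vertex attached to a divisor $d\le2$ — but none of these is ever a leaf, so they never enter the no-root case. Beyond that, the argument is a direct appeal to Theorems~\ref{th:structure of graph} and~\ref{th:main}.
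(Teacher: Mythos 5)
Your proof is correct and follows essentially the same route as the paper: the forward direction via $T_\ell^n = T_\ell\circ T_\ell^{n-1}$, and the converse via the ``in particular'' clause of Theorem~\ref{th:main}, Case~1(1), stated in contrapositive form. You simply spell out the step the paper leaves implicit (that for odd $\ell$ a vertex with indegree $0$ must sit at preperiod $\lambda_m=v$, since $\ell$ divides at most one of $p\pm1$), which is a welcome amount of extra detail but not a different argument.
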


\begin{proof}
Clearly, if $T_\ell(x)-t$ has a root modulo $p$, then $T_\ell^n(x)-t = T_\ell(T_\ell^{n-1}(x))-t$ is reducible. On the other hand, Theorem \ref{th:main} Case 1.1 implies that if $T_\ell^n(x)-t$ is reducible modulo $p$, then $\pper(\,\overline t\,) < v$, hence $T_\ell(x)-t$ has a root modulo $p$.
\end{proof}

Note that Theorem \ref{th:intro.1} in the introduction is a rewording of Theorem \ref{th:main}, Case 1.1. This particular case also shows that for a fixed prime $\ell$, there are infinitely many values $t$ such that every iterate $T_\ell^n(x)-t$ is irreducible. By Dirichlet's theorem on arithmetic progressions, there are infinitely many primes $p$ satisfying $p\equiv \pm1 \pmod\ell$. Let $p$ be one of these primes, then by Theorem \ref{th:point count}, the graph $G(\ell,p,1)$ contains $(\ell-1)\ell^{v-1}\omega_m/2$ vertices of maximal preperiod. That is, there are $(\ell-1)\ell^{v-1}\omega_m/2$ equivalence classes modulo $p$ such that if $t$ is in one of these classes, then every iterate $T_\ell^n(x)-t$ is irreducible.

\subsection{Discriminant formula}
We now consider prime divisors of the discriminant of $T_\ell^n(x)-t$. It turns out that the set of primes
\begin{align*}
S_{\ell,t} := \{ p \mid \disc_x(T_\ell^n(x)-t) \colon n\ge 1\}
\end{align*} 
is finite, which is due to the fact that the Chebyshev polynomials are \emph{postcritically finite}.

\begin{defn}
Let $K$ be a number field and let $\phi \in K[x]$. The set of \emph{critical points} of $\phi$ is $\cal R_\phi =\{r \in \overline K \colon \phi'(r) = 0\}$. The polynomial $\phi$ is postcritically finite if all of its critical points are preperiodic.
\end{defn}

\begin{prop} \label{prop:chebipper} 
The Chebyshev polynomial $T_n$ is postcritically finite.
\end{prop}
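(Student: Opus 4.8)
The plan is to show that every critical point of $T_n$ is preperiodic by exploiting the conjugacy of $T_n$ with the power map under the change of variables $x = z + z^{-1}$. First I would compute the critical points. Differentiating the defining identity $T_n(z+z^{-1}) = z^n + z^{-n}$ with respect to $z$ gives $(1 - z^{-2})\,T_n'(z+z^{-1}) = n(z^{n-1} - z^{-n-1})$, so $T_n'(z+z^{-1}) = n\,\frac{z^n - z^{-n}}{z - z^{-1}}$. Hence the critical points of $T_n$ are exactly the values $x = z + z^{-1}$ where $z^{2n} = 1$ but $z^2 \ne 1$, together with the possible endpoint behavior; concretely the critical points are $\{2\cos(j\pi/n) : 1 \le j \le n-1\}$, all of which lie in the interval $(-2, 2)$.

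Next I would track the forward orbit of each critical point under $T_n$. If $r = \zeta + \zeta^{-1}$ with $\zeta$ a $2n$-th root of unity, then $T_n(r) = \zeta^n + \zeta^{-n} = \pm 2$, since $\zeta^n = \pm 1$. So after a single application of $T_n$, every critical point lands on $2$ or $-2$. It therefore suffices to observe that $2$ and $-2$ are themselves preperiodic: indeed $T_n(2) = 2$ always (taking $z = 1$), and $T_n(-2) = (-1)^n \cdot 2$ (taking $z = -1$), so $-2$ is fixed when $n$ is even and maps to the fixed point $2$ when $n$ is odd. In all cases $2$ and $-2$ are preperiodic, so each critical point has preperiod at most $2$, and $T_n$ is postcritically finite.

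One bookkeeping point to handle carefully: the substitution $x = z + z^{-1}$ is two-to-one and one should make sure the critical points being enumerated are genuinely the roots of $T_n'$ with correct multiplicity, and that $j$ ranging over $1, \ldots, n-1$ gives all $n-1$ of them (matching $\deg T_n' = n-1$). This is routine — the map $z \mapsto z + z^{-1}$ sends the $2n$-th roots of unity other than $\pm 1$ onto exactly $n-1$ distinct real values, each hit twice, which accounts for all critical points. I expect this enumeration step to be the only place requiring any care; the orbit computation itself is immediate from the functional equation. An even shorter phrasing avoids coordinates entirely: the finitely many critical values all map into $\{2, -2\}$ under $T_n$, and $\{2, -2\}$ is forward-invariant, so the postcritical set is contained in the finite set $\mathcal{R}_{T_n} \cup \{2, -2\}$, which is preperiodic.
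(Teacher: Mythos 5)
Your proof is correct and follows essentially the same route as the paper: identify the critical points as $2\cos(j\pi/n)$ for $1 \le j \le n-1$, observe they all map to $\pm 2$ under $T_n$, and note that $\{2,-2\}$ is forward-invariant. The only cosmetic difference is that you derive the formula for $T_n'$ by differentiating the functional equation $T_n(z+z^{-1}) = z^n + z^{-n}$, whereas the paper quotes the identity $T_n' = nU_{n-1}$ and the known roots of the second-kind Chebyshev polynomial; these amount to the same computation.
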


\begin{proof}The derivative of $T_n$ is
\begin{align*}
\frac{d}{dx}T_n = n U_{n-1},
\end{align*}
where $U_{n-1}$ is the degree $n-1$ Chebyshev polynomial of the second kind and is defined by the relation
\begin{align*}
U_{n-1}(2\cos\theta) = \ds\frac{\sin(n\theta)}{\sin\theta}.
\end{align*}
The roots of $U_{n-1}$ are $x_k = 2\cos\left(\frac{k \pi}{n}\right)$, for $k \in\{1, 2, \ldots, n-1\}$, and it is easily seen that the forward orbit of these values is finite.
\begin{align*}
&T_n\left(2\cos\left(\frac{k\pi}{n}\right)\right) = 2\cos(k\pi) = (-1)^k \, 2,\\
&T_n(-2) = T_n(2\cos(\pi)) = 2\cos(n\pi) = (-1)^n\,2,\\
&T_n(2) = T_n(2\cos(0)) = 2\cos(0) = 2.
\end{align*}
\end{proof}

\begin{defn}
Let $K$ be a number field and $\phi = a_dx^d + \cdots + a_0 \in K[x]$. Let $\cal B(\phi)=\{\phi(r) \colon r \in \cal R_\phi\}$ denote the set of \emph{critical values} of $\phi$. The multiplicities of the critical points of $\phi$ are identified by
\begin{align*}
\phi'(x) = da_d\prod_{r \in \cal R_\phi}(x-r)^{m_r(\phi)},
\end{align*}
where $\cal R_\phi$ is the set of critical points of $\phi$. For each critical value $\beta \in \cal B(\phi)$, define
\begin{align*}
\cal M_\beta(\phi) = \sum_{r \in \cal R_\phi, \, \phi(r) = \beta} m_r(\phi).
\end{align*}
\end{defn}

Since the Chebyshev polynomials are postcritically finite, the critical values of their iterates coincide. In fact, the union $\bigcup_{n\ge 1} \cal B(T_\ell^n(x)-t)$ is a finite set. Using a discriminant formula from \cite{AHM05}, we can show that $S_{\ell,t}$ is finite.

\begin{prop}\label{prop:disc} 
For $\phi = a_dx^d + \cdots + a_0 \in K[x]$ and $n \ge 1$
\begin{align*}
\disc_x(\phi^n(x)-t) = (-1)^{(d^n-1)(d^n-2)/2}d^{nd^n}a_d^{(d^n-1)^2/(d-1)}\prod_{\beta \in \cal B(\phi^n(x)-t)}(t-\beta)^{\cal M_\beta(\phi^n)}.
\end{align*}
\end{prop}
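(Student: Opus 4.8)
The plan is to derive the formula from the standard resultant/discriminant identity together with an inductive decomposition of the critical values of $\phi^n$. Recall that for a monic-up-to-leading-coefficient polynomial $f$ of degree $m$ with leading coefficient $c$, one has $\disc_x(f) = (-1)^{m(m-1)/2}c^{-1}\Res_x(f,f')$. Applying this with $f = \phi^n(x)-t$, which has degree $d^n$ and leading coefficient $a_d^{(d^n-1)/(d-1)}$ (since $\phi^n$ has leading coefficient $a_d^{1+d+\cdots+d^{n-1}}$), produces the sign $(-1)^{d^n(d^n-1)/2}$ and a power of $a_d$; the remaining work is to evaluate $\Res_x\!\big(\phi^n(x)-t,\ (\phi^n)'(x)\big)$.

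First I would analyze $(\phi^n)'$ via the chain rule: $(\phi^n)'(x) = \prod_{i=0}^{n-1}\phi'\!\big(\phi^i(x)\big)$. Hence the resultant factors as $\prod_{i=0}^{n-1}\Res_x\!\big(\phi^n(x)-t,\ \phi'(\phi^i(x))\big)$. For each $i$, I would use the product formula for the resultant over the roots: writing $\phi'(x) = d\,a_d\prod_{r\in\cal R_\phi}(x-r)^{m_r(\phi)}$, we get $\phi'(\phi^i(x)) = d\,a_d\prod_{r\in\cal R_\phi}(\phi^i(x)-r)^{m_r(\phi)}$, and evaluating $\Res_x(\phi^n(x)-t,\phi'(\phi^i(x)))$ amounts to a product of $(t-\beta)$ over critical values $\beta$, with the roots of $\phi^n(x)-t$ feeding in the leading-coefficient powers. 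The key combinatorial identity to track is that summing over the $n$ layers $i=0,\dots,n-1$ and over $r$ with $\phi(r)=\beta$ reorganizes into the exponent $\cal M_\beta(\phi^n) = \sum_{r\in\cal R_{\phi^n},\,\phi^n(r)=\beta} m_r(\phi^n)$, using that a critical point of $\phi^n$ is a point $x$ with $\phi^i(x)\in\cal R_\phi$ for some $i$, with multiplicity inherited multiplicatively. This is exactly the content of the discriminant formula in \cite{AHM05}, so the cleanest route is to cite that result directly and merely verify that the normalization (leading coefficient, sign exponent) matches the statement here, specializing their formula to $\phi^n$ in place of $\phi$ and checking $a_d^{(d^n-1)^2/(d-1)}$ arises as $(a_d^{(d^n-1)/(d-1)})^{d^n-1}$ and $d^{nd^n}$ as $(d^n)^{d^n}$ rewritten through the layer count.

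The main obstacle I expect is bookkeeping the exponents of $a_d$ and $d$ correctly through the two nested products (over layers $i$ and over critical points $r$), since several powers of the leading coefficient enter: one from the discriminant-to-resultant conversion, one from each factor $\phi'(\phi^i(x))$ contributing $d\,a_d$, and more from clearing denominators in the root-product form of the resultant. I would organize this by first proving the case $n=1$ (which is the classical formula $\disc_x(\phi(x)-t) = (-1)^{(d-1)(d-2)/2}d^d a_d^{d-1}\prod_{\beta\in\cal B(\phi)}(t-\beta)^{\cal M_\beta(\phi)}$ after noting $d^n(d^n-1)/2 \equiv (d^n-1)(d^n-2)/2 \bmod 2$ when $d^n$ is... — more simply, observing the stated sign exponent $(d^n-1)(d^n-2)/2$ is the one that appears in \cite{AHM05}), and then invoking it with $\phi$ replaced by the degree-$d^n$ polynomial $\phi^n$, whose leading coefficient is $a_d^{(d^n-1)/(d-1)}$ and whose critical-value data is governed by postcritical finiteness so that $\cal B(\phi^n(x)-t)$ is the finite set $\{\,\phi^j(\beta) : \beta\in\cal B(\phi),\ 0\le j\le n-1\,\}\cup\{t\}$ appropriately interpreted. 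Since the excerpt explicitly says ``Using a discriminant formula from \cite{AHM05}'', the intended proof is a one-line application of that cited formula to the iterate $\phi^n$, and I would present it as such, leaving the exponent verification as the only substantive check.
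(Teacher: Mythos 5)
Your proposal matches the paper's approach: the paper's entire proof of this proposition is the citation ``\cite{AHM05}, Proposition 3.2,'' and you correctly identify that the statement is exactly that cited formula (equivalently, the $n=1$ case applied to the degree-$d^n$ polynomial $\phi^n$ with leading coefficient $a_d^{(d^n-1)/(d-1)}$, under which the exponents $d^{nd^n}=(d^n)^{d^n}$ and $a_d^{(d^n-1)^2/(d-1)}$ check out). Your additional resultant sketch is sound but unnecessary; the only slip is the stray ``$\cup\{t\}$'' in your description of $\cal B(\phi^n(x)-t)$, which plays no role in the argument.
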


\begin{proof}
\cite{AHM05}, Proposition 3.2.
\end{proof}

\begin{cor} \label{cor:disc} 
\
\begin{enumerate}
\item If $\ell$ is odd, then
\begin{align*}
\disc_x(T_\ell^n(x)-t) = \ell^{n\ell^n}(4-t^2)^{(\ell^n-1)/2},
\end{align*}
and the discriminants satisfy the recursion
\begin{align*}
\disc_x(T_\ell^{n+1}(x)-t) = \ell^{\ell^{n+1}}\big(\disc_x(T_\ell^n(x)-t)\big)^\ell(4-t^2)^{(\ell-1)/2}.
\end{align*}
\item If $\ell=2$, then 
\begin{align*}
\disc_x(T_2^n(x)-t) = 2^{n2^n}(2-t)(4-t^2)^{2^{n-1}-1},
\end{align*}
and the discriminants satisfy the recursion
\begin{align*}
\disc_x(T_2^{n+1}(x)-t) = 4^{2^n}(2-t)\big(\disc_x(T_2^n(x)-t)\big)^2.
\end{align*}
\end{enumerate}
\end{cor}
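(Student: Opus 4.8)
The plan is to apply Proposition~\ref{prop:disc} directly with $\phi = T_\ell$, so that $d=\deg\phi=\ell$, the leading coefficient is $a_d=1$, and $\phi^n = T_\ell^n = T_{\ell^n}$ by the composition law $T_d\circ T_e=T_{de}$. The only inputs still needed are the critical values of $T_{\ell^n}$ and the multiplicities $\cal M_\beta(T_{\ell^n})$, and these come straight out of the proof of Proposition~\ref{prop:chebipper}. Writing $N=\ell^n$, we have $T_N'(x)=N\,U_{N-1}(x)$ with $U_{N-1}$ monic of degree $N-1$ having the $N-1$ distinct roots $x_k=2\cos(k\pi/N)$, $1\le k\le N-1$; hence every critical point of $T_N$ is simple ($m_{x_k}(T_N)=1$) and $T_N(x_k)=2\cos(k\pi)=(-1)^k\,2$, so $\cal B(T_N)=\{2,-2\}$. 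Therefore $\cal M_2(T_N)$ equals the number of even $k$ in $\{1,\dots,N-1\}$ and $\cal M_{-2}(T_N)$ the number of odd such $k$.

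From here I would split on the parity of $\ell$. If $\ell$ is odd then $N=\ell^n$ is odd, so $\{1,\dots,N-1\}$ has $(N-1)/2$ even and $(N-1)/2$ odd members and $\cal M_2(T_N)=\cal M_{-2}(T_N)=(\ell^n-1)/2$; Proposition~\ref{prop:disc} then gives
\begin{align*}
\disc_x(T_\ell^n(x)-t)=(-1)^{(\ell^n-1)(\ell^n-2)/2}\,\ell^{\,n\ell^n}\bigl((t-2)(t+2)\bigr)^{(\ell^n-1)/2},
\end{align*}
and since $\ell^n$ is odd one checks $(\ell^n-1)(\ell^n-2)/2\equiv(\ell^n-1)/2\pmod 2$, which turns $(-1)^{(\ell^n-1)(\ell^n-2)/2}(t^2-4)^{(\ell^n-1)/2}$ into $(4-t^2)^{(\ell^n-1)/2}$ and yields the closed form. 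If $\ell=2$ then $N=2^n$ is even, so $\cal M_{-2}(T_{2^n})=2^{n-1}$ and $\cal M_2(T_{2^n})=2^{n-1}-1$, and substituting these into Proposition~\ref{prop:disc}, tracking the sign $(-1)^{(2^n-1)(2^n-2)/2}$, and reorganizing $(t-2)^{2^{n-1}-1}(t+2)^{2^{n-1}}$ via $4-t^2=(2-t)(2+t)$ produces the second closed form. In both cases the recursions follow by direct comparison of the closed forms for $n$ and $n+1$: dividing and collecting powers is a one-line exponent computation using $\ell\cdot\tfrac{\ell^n-1}{2}+\tfrac{\ell-1}{2}=\tfrac{\ell^{n+1}-1}{2}$ (resp. the analogous identity for $\ell=2$).

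The step I expect to cause the most trouble is purely the sign bookkeeping: Proposition~\ref{prop:disc} naturally outputs $(t-2)^{\cal M_2}(t+2)^{\cal M_{-2}}$ together with the sign $(-1)^{(d^n-1)(d^n-2)/2}$, whereas the corollary is phrased using $4-t^2$ (odd case) and a single linear factor $2\pm t$ (the $\ell=2$ case), so one must reconcile the parity of that $(-1)$-exponent against the parity of $(\ell^n-1)/2$ --- and, when $\ell=2$, against the difference $\cal M_{-2}-\cal M_2$, which is the delicate point because there the two critical values carry unequal multiplicities. Everything else --- the identification $T_\ell^n=T_{\ell^n}$, simplicity of the critical points, the count of even versus odd $k$, and verification of the recursions --- is routine.
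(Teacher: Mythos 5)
Your proposal follows exactly the paper's route: read off the critical points and values of $T_{\ell^n}$ from Proposition~\ref{prop:chebipper}, compute $\cal M_{\pm2}(T_\ell^n)$, substitute into Proposition~\ref{prop:disc}, and verify the recursions by comparing closed forms. The paper is simply terser (it asserts the multiplicities and says the formula ``follows immediately''), while you spell out the sign bookkeeping, and your multiplicity counts and the parity argument $(\ell^n-1)(\ell^n-2)/2\equiv(\ell^n-1)/2\pmod 2$ in the odd case are all correct.

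One warning about the step you yourself flag as delicate: if you actually carry out the $\ell=2$ reorganization of $(-1)^{(2^n-1)(2^n-2)/2}(t-2)^{2^{n-1}-1}(t+2)^{2^{n-1}}$, the surviving linear factor comes out as $(2+t)$, not the $(2-t)$ printed in the corollary. A sanity check at $n=1$ confirms this: $\disc_x(x^2-2-t)=4(2+t)$, whereas the stated formula gives $4(2-t)$; likewise at $n=2$ one gets $256(2-t)(2+t)^2$ versus the stated $256(2-t)^2(2+t)$. So your computation will not literally ``produce the second closed form'' --- it will produce the corrected one, $2^{n2^n}(2+t)(4-t^2)^{2^{n-1}-1}$, with which the stated recursion $\disc_x(T_2^{n+1}(x)-t)=4^{2^n}(2-t)\bigl(\disc_x(T_2^n(x)-t)\bigr)^2$ is in fact consistent. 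Be prepared to note this discrepancy rather than force your (correct) answer to match the printed statement.
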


\begin{proof} We begin with the case where $\ell$ is odd. In Proposition \ref{prop:chebipper}, we identified the $\ell^n-1$ distinct critical points of $T^n_\ell(x)$, and identified 2 and $-2$ as the two critical values. Each critical value is the image of half of the critical points, so $\cal M_2(T_\ell^n) = \cal M_{-2}(T_\ell^n) = (\ell^n-1)/2$ and we can apply Proposition \ref{prop:disc}. For the recursion, 
\begin{align*}
\disc_x(T_\ell^{n+1}(x)-t) &= \ell^{(n+1)\ell^{n+1}}(4-t^2)^{(\ell^{n+1}-1)/2}\\
&=\big(\ell^{n\ell^n}\big)^\ell\ell^{\ell^{n+1}}\big((4-t^2)^{(\ell^n-1)/2}\big)^\ell(4-t^2)^{(\ell-1)/2}\\
&= \big(\disc_x(T_\ell^n(x)-t)\big)^\ell(4-t^2)^{(\ell-1)/2}.
\end{align*}

\sss In the case $\ell=2$, $T_2^n$ has $2^n-1$ critical points. It is a quick check to verify that $\cal M_2(T_2^n) = 2^{n-1}-1$ and $\cal M_{-2}(T_2^n) = 2^{n-1}$, and the discriminant formula follows immediately. For the recursion,
\begin{align*}
\disc_x(T_2^{n+1}(x)-t) &= 2^{(n+1)2^{n+1}}(2-t)(4-t^2)^{2^n-1}\\
&= 4^{2^n}\big(2^{n2^n}\big)^2(2-t)^3\big((4-t^2)^{2^{n-1}-1}\big)^2\\
&=4^{2^n}(2-t)\big(\disc_x(T_2^n(x)-t)\big)^2.
\end{align*}
\end{proof}

\subsection{Reciprocity}
A result of Aitken, Hajir, and Maire reinterprets the classical result of Dedekind regarding the decomposition of primes in terms of the graphs of postcritically finite polynomials over finite fields. We give a specialization of their result, which is tailored to our setting. Recall that $K_n$ denotes a Chebyshev radical extension associated to the polynomial $T_\ell^n(x)-t$, and $G(\ell,p,2\mu\ell^n)$ is the graph of $T_\ell$ over the finite field of order $p^{2\mu\ell^n}$. 

\begin{prop} \label{prop:AHM}
Suppose $T_\ell^n(x)-t$ is irreducible over $\bbz[x]$ for all $n \ge 1$, and the prime $p$ does not divide $\ell(4-t^2)$. Let $\mu = \ord_{(\bbz/\ell\bbz)^\times/(\pm1)}p$. Then for $k \ge 1$, the number of degree $k$ primes of $K_n$ lying over $p\bbz$ is $N/k$, where $N$ is the number of paths of length $n$ in $G(\ell,p,2\mu\ell^n)$ that start at a vertex of weight $k$ and end at $\overline t$.
\end{prop}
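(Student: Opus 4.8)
The plan is to run Dedekind's theorem through the graph description of Section~2. Since $p\nmid\ell(4-t^2)$, Corollary~\ref{cor:disc} gives $p\nmid\disc_x(T_\ell^n(x)-t)$ for every $n$; as $[\mathcal O_{K_n}\colon\bbz[\theta_n]]^2$ divides this discriminant, $p$ does not divide the index $[\mathcal O_{K_n}\colon\bbz[\theta_n]]$, so Dedekind's theorem applies: the primes of $K_n$ lying over $p\bbz$ are in degree-preserving bijection with the monic irreducible factors of the reduction of $T_\ell^n(x)-t$ modulo $p$. Thus it suffices to count the degree-$k$ irreducible factors of this reduction.

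Because $p\nmid\disc_x(T_\ell^n(x)-t)$, the reduction is separable, hence has $\ell^n$ distinct roots in $\overline{\bbf_p}$; the Frobenius of $\bbf_p$ permutes them, and a Frobenius orbit of size $k$ is precisely an irreducible factor of degree $k$. The orbit of a root $a$ has size $[\bbf_p(a)\colon\bbf_p]=\weight(a)$, so the degree-$k$ factors are in bijection with the weight-$k$ roots grouped $k$ at a time. Hence the number of degree-$k$ primes of $K_n$ over $p$ equals $\tfrac1k\cdot\#\{a\in\overline{\bbf_p}\colon T_\ell^n(a)=\overline t,\ \weight(a)=k\}$.

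It remains to identify this root count with $N$. The graph $G(\ell,p,2\mu\ell^n)$ is a functional graph on the vertex set $\bbf_{p^{2\mu\ell^n}}$, and the length-$n$ forward path out of a vertex $a$ terminates at $T_\ell^n(a)$; so the weight-$k$ vertices that start a length-$n$ path to $\overline t$ are exactly the weight-$k$ elements $a\in\bbf_{p^{2\mu\ell^n}}$ with $T_\ell^n(a)=\overline t$. The content is therefore that every root of $T_\ell^n(x)-t$ in $\overline{\bbf_p}$ already lies in $\bbf_{p^{2\mu\ell^n}}$, i.e.\ that its weight divides $2\mu\ell^n$. To see this I would write $a=\alpha+\alpha\inv$, note that $\alpha^{\ell^n}+\alpha^{-\ell^n}=\overline t\in\bbf_p$ forces $\ord(\alpha^{\ell^n})\mid p^2-1$, deduce from Proposition~\ref{prop:preperiod/period calculation} a bound on $\pper(a)$ coming from $\nu_\ell(p^2-1)$ and the valuation computations of Lemma~\ref{lem:valuation}, and then read the weight off Theorem~\ref{th:weights}, checking in each case --- including when $\overline t$ is periodic and when $\overline t$ lies in the component of an exceptional fixed point --- that the weight divides $2\mu\ell^n$. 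Assembling the three steps yields the count $N/k$.

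The step I expect to be the real obstacle is the last one: verifying, uniformly in $n$, that $\bbf_{p^{2\mu\ell^n}}$ is large enough to contain all roots of $T_\ell^n(x)-t$. Concretely this means matching the preperiod bound against the weight tables of Theorem~\ref{th:weights} across the odd-$\ell$ and $\ell=2$ cases and the $D_1$ versus $D_2$ subcases, and handling separately the components of the critical fixed points $\pm2$ (and $0$ when $\ell=2$), where the preperiodic trees are attached differently. The first two steps are a routine packaging of Dedekind's theorem and the Galois theory of finite fields.
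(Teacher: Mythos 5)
The paper does not actually prove this proposition: it is quoted, in a form specialized to $T_\ell$, from Aitken--Hajir--Maire (\cite{AHM05}, Proposition 5.1), so your argument is necessarily a different route --- you are reconstructing the cited result rather than matching an in-paper proof. Your skeleton is the right one and is essentially what the general result rests on: $p\nmid\ell(4-t^2)$ together with Corollary \ref{cor:disc} gives $p\nmid\disc_x(T_\ell^n(x)-t)$, hence $p$ does not divide the index $[\mathcal O_{K_n}\colon\bbz[\theta_n]]$ and Dedekind applies; separability mod $p$ turns degree-$k$ irreducible factors into size-$k$ Frobenius orbits of roots, i.e.\ into weight-$k$ roots counted $k$ at a time; and since the functional graph has out-degree one, length-$n$ paths ending at $\overline t$ biject with their initial vertices, so $N$ is exactly the number of weight-$k$ roots lying in $\bbf_{p^{2\mu\ell^n}}$. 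Two remarks on the step you flag as the obstacle. First, it is less delicate than you fear: because $p\nmid 4-t^2$ we have $\overline t\notin\{-2,2\}$, so $\overline t$ is not a critical value of $T_\ell^n$ and the exceptional components rooted at $\pm2$ (and $0$ for $\ell=2$) never force a case split in the path count. Second, the containment of all roots in $\bbf_{p^{2\mu\ell^n}}$ has a short direct proof that bypasses the case analysis of Theorem \ref{th:weights} entirely: if $T_\ell^n(a)=\overline t$ and $a=\alpha+\alpha\inv$, then $\tau:=\alpha^{\ell^n}$ satisfies $\tau+\tau\inv=\overline t\in\bbf_p$, so $\tau$ is a root of $x^2-\overline t\,x+1$ and $\ord(\tau)\mid p^2-1$, whence $\ord(\alpha)=\ell^jd$ with $d\mid p^2-1$ and $j\le n+\nu_\ell(p^2-1)$. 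If $\ell\mid p^2-1$ then $\mu=1$, and otherwise $\nu_\ell(p^2-1)=0$, so in either case $j\le n+\nu_\ell(p^{2\mu}-1)=\nu_\ell(p^{2\mu\ell^n}-1)$ by Lemma \ref{lem:valuation}; since also $d\mid p^2-1\mid p^{2\mu\ell^n}-1$ and $\gcd(d,\ell)=1$, we get $\ord(\alpha)\mid p^{2\mu\ell^n}-1$, hence $a\in\bbf_{p^{2\mu\ell^n}}$. With that inserted, your proposal is a complete and self-contained proof of the proposition, which is arguably preferable to the paper's bare citation.
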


\begin{proof}
\cite{AHM05}, Proposition 5.1.
\end{proof}

The following results regarding primes now follow immediately from Proposition \ref{prop:AHM} and Theorem \ref{th:main}.

\begin{cor} \label{cor:prime decomposition}
Suppose $p\nmid \ell(4-t^2)$, and let $\rho = \pper(\,\overline t \,)$.

\textbf{Case 1:} $\ell$ is odd.
\begin{enumerate}
\item If $\rho>0$, then $p$ splits in $K_1, \ldots, K_{v-\rho}$ and is totally inert afterwards.
\item If $\rho=0$, then there is at least one prime of degree 1 above $p$ at every level, and exactly one prime of degree one at the infinite level.
\end{enumerate}

\textbf{Case 2:} $\ell=2$.
\begin{enumerate}
\item If $\rho>0$ and $\overline t$ is connected to a cycle corresponding to a divisor of $D_1$, then $p$ splits completely in $K_1, \ldots, K_{v-\rho}$ and is totally inert afterwards.
\item If $\rho>0$ are $\overline t$ is connected to a cycle corresponding to a divisor of $D_2$, then $p$ is inert in $K_1$, then splits completely in $K_2, \ldots, K_{v-\rho+1}$, and is completely inert afterwards.
\item If $\rho=0$, then there is at least one prime of degree 1 above $p$ at every level, and exactly one prime of degree one at the infinite level.
\end{enumerate}
\end{cor}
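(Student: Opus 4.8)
The plan is to combine Proposition~\ref{prop:AHM} (equivalently, Dedekind's theorem) with Theorem~\ref{th:main}: the number of degree-$k$ primes of $K_n$ lying over $p$ equals the number of degree-$k$ irreducible factors of $T_\ell^n(x)-t$ modulo $p$, and Theorem~\ref{th:main} lists those factors explicitly. Two preliminary reductions pin down which case of Theorem~\ref{th:main} is in force. Since $p\nmid\ell(4-t^2)$, Corollary~\ref{cor:disc} gives $p\nmid\disc_x(T_\ell^n(x)-t)$ for every $n\ge1$, so Dedekind's theorem applies at every level; and since $\overline t\in\bbf_p$, automatically $\overline t\neq\pm2$ (else $p\mid 4-t^2$). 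Moreover, if $\rho=\pper(\overline t)>0$, then $\overline t$ is a strictly preperiodic vertex of $G(\ell,p,1)$, which by Theorem~\ref{th:structure of graph} can only happen when $\ell\mid p^2-1$, i.e.\ when $\mu=1$; and if $\rho=0$, then $\overline t$, being periodic and $\neq\pm2$, corresponds to a divisor $d>2$ of $D_1$ or $D_2$. Thus the cases split exactly as stated.

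In the cases $\rho>0$ (Case~1(1) and Case~2(1),(2)), Theorem~\ref{th:main} gives the factorization of $T_\ell^n(x)-t$ modulo $p$ as an explicit function of $n$. For $n$ below the indicated threshold the factors are all linear (Cases~1(1),(2)(1)) --- giving complete splitting --- or all quadratic (Case~2(2)) with the number of factors doubling from one level to the next --- giving inertia in $K_1$ and then complete splitting in $K_{n+1}/K_n$ up to the threshold. Above the threshold the number of irreducible factors is constant, equal to $\ell^{v-\rho}$ (or $2^{v-\rho}$ in Case~2(2)), while $[K_n:K_{n_0}]$ grows by a factor of $\ell$ (or $2$) per level, where $n_0$ denotes the threshold. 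Since the number of primes of $K_n$ over $p$ equals the number of primes of $K_{n_0}$ over $p$, and each prime of $K_{n_0}$ has at least one prime above it in $K_n$, every prime of $K_{n_0}$ has exactly one prime above it in every $K_n$, hence is inert in each extension $K_{n+1}/K_n$ for $n\ge n_0$ (as $p$ is unramified); this is the ``totally/completely inert afterwards'' assertion.

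In the cases $\rho=0$ (Case~1(2) and Case~2(3)), the relevant parts of Theorem~\ref{th:main} are Cases~1(3),(4) and~2(5),(6). Every such factorization contains the distinguished linear factor $x-a_0$, where $a_0$ is the unique periodic preimage of $\overline t$ at distance $n$; since $a_0$ lies on the cycle of $\overline t$ it belongs to $\bbf_p$, so $\weight(a_0)=1$, and hence there is at least one degree-$1$ prime above $p$ at every level. For the infinite level, consider a compatible sequence $(a_n)$ of preimages of $\overline t$ with $\weight(a_n)=1$ for all $n$: if some $a_n$ were strictly preperiodic, then $a_{n+1},a_{n+2},\dots$ would be strictly preperiodic with preperiods tending to infinity, whence by Theorem~\ref{th:weights} their weights would be unbounded, a contradiction. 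So every $a_n$ is periodic, which forces $(a_n)$ to be the sequence of periodic preimages of $\overline t$. Hence this periodic branch is the unique compatible sequence of degree-$1$ primes, i.e.\ there is exactly one degree-$1$ prime at the infinite level.

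The main obstacle is not any single-level computation --- those are immediate from Theorem~\ref{th:main} --- but the passage to statements about the tower: ``totally inert afterwards'' and ``exactly one degree-$1$ prime at the infinite level'' concern how primes of $K_n$ decompose in $K_{n+1}$, about which Theorem~\ref{th:main} says nothing directly. The two devices that make this routine are (i) the stabilization of the factor count for large $n$, which by the identity $\sum f_i=[K_n:\bbq]$ forces all further growth to be inertial rather than splitting, and (ii) the monotone growth of $\weight(a)$ with $\pper(a)$ recorded in Theorem~\ref{th:weights}, which eliminates every branch of preimages of $\overline t$ except the periodic one.
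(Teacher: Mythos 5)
Your proposal is correct and takes the same route as the paper: the paper's entire proof is the remark that the corollary ``follows immediately from Proposition~\ref{prop:AHM} and Theorem~\ref{th:main},'' and you have simply supplied the bookkeeping (the reduction $\overline t\neq\pm2$ from $p\nmid(4-t^2)$, the stabilization of the factor count forcing inertia, and the weight-growth argument isolating the periodic branch at the infinite level), all of which is sound.
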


In our introduction, we noted two special cases ($\ell$ odd and $t=2$, and $\ell=2$ and $t=0$) for their connection to the cyclotomic fields. We show that our result regarding the decomposition of primes coincides with cyclotomic reciprocity in these cases.

\begin{example} [Cyclotomic $\bbz_2$ extension]
Let $\ell=2$, $t=0$, and $p\neq 2$ a prime. By Theorem \ref{th:structure of graph}, $0 \in G(\ell,p,1)$ has preperiod 2. Hence by Corollary \ref{cor:prime decomposition}, the prime $p$ splits in $K_n = \bbq(\zeta_{2^{n+2}})^+$ if and only if $p \equiv \pm1 \bmod 2^{n+2}$.
\end{example}

\begin{example} [Cyclotomic $\bbz_\ell$ extension]
Let $\ell$ be an odd prime and $t=2$. In this case, we need to consider the minimal polynomial $m_{\ell^n}(x)$ of $\zeta_{\ell^n} + \zeta_{\ell^n}\inv$, which necessarily divides $T_\ell^n(x)-2$. The distribution of the roots given in Theorem \ref{th:structure of graph} shows that $m_{\ell^n}(x)$ splits if and only if $T_\ell^n(x)-2$ splits. Moreover, $\bbq(\zeta_{\ell^n})^+$ is monogenic and its ring of integer is generated by $\zeta_{\ell^n} + \zeta_{\ell^n}\inv$, hence the discriminant of $m_{\ell^n}(x)$ is equal to the discriminant of $\bbq(\zeta_{\ell^n})^+$, which is a power of $\ell$. Thus, for any prime $p\neq \ell$, the decomposition of $p\bbz$ in $\bbq(\zeta_{\ell^n})^+$ is determined by Corollary \ref{cor:prime decomposition}. By Theorem \ref{th:main}, $T_\ell^n(x)-2$ splits modulo $p$ if and only if $p \equiv \pm1 \bmod\ell^n$, thus $p\bbz$ splits in $\bbq(\zeta_{\ell^n})^+$ if and only if $p \equiv \pm1 \bmod\ell^n$.
\end{example}

\begin{example} 
Consider $T_2^n(x)-105$ and the associated extension $K_n$. These polynomials are irreducible since $105 \equiv 0 \pmod 3$, and 0 is a vertex of maximal height in $G(2,3,1)$ (see Figure \ref{fig:G(2,3,1)}). The discriminant formula in Corollary \ref{cor:disc} shows that only 2, 103, and 107 can ramify in the tower of fields. The decomposition of a prime $p$ (different from 2, 103, and 107) in these extensions is determined by the tree rooted at $105 \bmod p \in G(2,p,2^{n+1})$. Noting that 105 is divisible by 3, 5, and 7, the behavior of 3, 5, and 7 is determined by the tree rooted at 0, and thus the splitting of these primes is exactly the same as in the cyclotomic $\bbz_2$-extension. Namely, 3 and 5 are inert, and 7 splits in $K_1$.

For $p=11$, we see that $105 \equiv -5 \pmod{11}$, and therefore we must consider the tree rooted at $-5$. However, 0 and $-5$ occupy the same position in the graph $G(2,11,1)$ (see Figure \ref{fig:G(2,11,1)}), hence the tree rooted at $-5$ in $G(2,11,2^{n+1})$ is identical to the tree rooted at 0. Thus the splitting behavior of 11 in this extension is also identical to its behavior in the $\bbz_2$-extension, that is, it is inert.

The prime 13 is the smallest prime that displays a different behavior in this tower of fields than in the cyclotomic extension. Since $105 \equiv 1 \pmod{13}$, we focus on the tree rooted at $1$ in $G(2,13,2^{n+1})$. We can see in $G(2,13,1)$ (see Figure \ref{fig:G(2,13,1)}) the vertex 1 is contained in a tree of maximal height and is the root of a tree of height one. By the previous theorem, 13 splits in $K_1$, and is otherwise inert.
\end{example}

\begin{figure}[!ht]
\begin{tikzpicture}[>=latex, grow=up, every node/.style={circle,fill=Green},scale=.6,
level 2/.style={sibling distance=10mm}]
\node[label = left:2] at (0,0) (2) {}
	child[<-] { node[label=left:$-2$] {}
		child[<<-] {node[label=left:0] {} }
		};
\end{tikzpicture}
\caption{$G(2,3,1)$}
\label{fig:G(2,3,1)}
\end{figure}
\begin{figure}[!ht]
\begin{tikzpicture}[>=latex, grow=up, every node/.style={circle,fill=Green},scale=.6,
level 2/.style={sibling distance=10mm}]
\node[label = left:2] at (0,0) (2) {}
	child[<-] { node[label=left:$-2$] {}
		child[<<-] {node[label=left:0] {} }
		};
\draw (2) edge[loop below] (2);
\node[label=left:$-1$] at (3,0) (-1) {}
	child[<-] { node[label=left:1] {}
		child { node[label=right:5] {} }
		child { node[label=left:$-5$] {} }
		};
\draw (-1) edge[loop below] (-1);
\node[label=left:$-4$] at (6,0) (-4) {}
	child[<-] { node[label=left:$-3$] {} };
\node[label=right:3] at (7.5,0) (3) {}
	child[<-] { node[label=right:4] {} };
\draw[->] (-4) edge (3);
\draw[->] (3.south) -- ++(0,-.4) -| (-4.south);
\end{tikzpicture}
\caption{$G(2,11,1)$}
\label{fig:G(2,11,1)}
\end{figure}
\begin{figure}[!ht]
\begin{tikzpicture}[>=latex, grow=up, every node/.style={circle,fill=Green},scale=.6,
level 2/.style={sibling distance=10mm}]
\node[label = left:2] at (0,0) (2) {}
	child[<-] { node[label=left:$-2$] {}
		child[<<-] {node[label=left:0] {} }
		};
\draw (2) edge[loop below] (2);
\node[label=left:$-1$] at (3,0) (-1) {}
	child[<-] { node[label=left:1] {}
		child { node[label=right:4] {} }
		child { node[label=left:$-4$] {} }
		};
\draw (-1) edge[loop below] (-1);
\node[label=left:$-3$] at (6,0) (-3) {}
	child[<-] { node[label=left:5] {} };
\node[label=above left:\tiny $-6$] at (8,0) (-6) {}
	child[<-] { node[label=left:3] {} };
\node[label=right:$-5$] at (10,0) (-5) {} 
	child[<-] { node[label=left:6] {} };
\draw[->] (-3) edge (-6) (-6) edge (-5);
\draw[->] (-5.south) -- ++(0,-.4) -| (-3.south);
\end{tikzpicture}
\caption{$G(2,13,1)$}
\label{fig:G(2,13,1)}
\end{figure}

\section{\large Density of Periodic Points.}
Jones \cite{Jon12} proposed the question: what proportion of points in $G(\ell,p,n)$ are periodic? In particular, does the following limit exist?
\begin{align*}
\lim_{n\to \infty} \frac{\#\{\text{periodic points in }G(\ell,p,n)\}}{p^n}
\end{align*}
By Theorem \ref{th:point count}, we see that
\begin{align*}
\lim_{n\to \infty} &\frac{\#\{\text{periodic points in }G(\ell,p,n)\}}{p^n} \\
&= \lim_{n \to \infty} \frac{\omega^-+\omega^+}{2p^n}\\
&= \lim_{n\to\infty} \frac{1}{2p^n}\left(\frac{p^n-1}{\ell^{\lambda^-}} + \frac{p^n+1}{\ell^{\lambda^+}}\right) \\
&= \lim_{n\to\infty}\frac{1}{2}\left(\frac{1}{\ell^{\lambda^-}} + \frac{1}{\ell^{\lambda^+}}\right) \\
&= 
\begin{cases} 
\ds\lim_{n\to\infty}\frac{1}{2\ell^{\lambda_m}} + \frac{1}{2} & \text{ if $\ell\neq 2$}, \\ 
\ds\lim_{n\to\infty}\frac{1}{2^{\lambda_m+1}} + \frac{1}{4} & \text{ if $\ell=2$}.
\end{cases}
\end{align*}

Recalling Lemma \ref{lem:valuation}, we see that the limits, as written, do not exist since 
for infinitely many positive integers $n$
\begin{align*}
\lambda_m = \nu_\ell(p^{2n}-1) = 
\begin{cases}
0 & \text{ if $\ell$ is odd,}\\
3 & \text{ if $\ell=2$.}
\end{cases}
\end{align*}

This limit, however, can exist if we restrict ourselves to an appropriate tower of fields. For example, consider the sequence
\begin{align*}
\big\{a_n\big\}_{n=1}^\infty = \big\{2, \quad 2^2\,3, \quad 2^3 \,3^2 \,5, \quad 2^4 \,3^3 \,5^2\,7 , \quad \ldots \quad , \quad2^n \,3^{n-1} \,5^{n-2} \,7^{n-3} \,\cdots \,p_n, \quad \ldots \big\},
\end{align*}
where $p_n$ denote the $n$-th prime, and the corresponding tower of fields
\begin{align*}
\bbf_p \subset \bbf_{p^{a_1}} \subset \bbf_{p^{a_2}} \subset \bbf_{p^{a_3}} \subset \cdots \subset \overline\bbf_p,
\end{align*}
where $\bigcup \bbf_{p^{a_n}} = \overline \bbf_p$. This sequence of $a_n$'s is constructed so that for any prime $\ell$, $c$ divides all but a finitely many $a_n$, thus $\nu_\ell(a_n)$ (and therefore $\lambda_m$) grows monotonically and without bound as $n$ increases. Taking the limit up this tower of fields, 
\begin{enumerate}
\item if $\ell$ is odd,
\begin{align*}
\lim_{n\to \infty} \frac{\#\{\text{periodic points in }G(\ell,p,a_n)\}}{p^{a_n}} = \ds\lim_{n\to\infty}\frac{1}{2\ell^{\nu_\ell(p^{2c}-1)+\nu_\ell(a_n)}} + \frac{1}{2} = \frac{1}{2},
\end{align*}
\item if $\ell = 2$, 
\begin{align*}
\lim_{n\to \infty} \frac{\#\{\text{periodic points in }G(2,p,a_n)\}}{p^{a_n}} = \ds\lim_{n\to\infty}\frac{1}{2^{\nu_2(p^2-1)+\nu_2(a_n)}} + \frac{1}{4} = \frac{1}{4}.
\end{align*}
\end{enumerate}

\section{Acknowledgements}
The author is exceptionally grateful to Farshid Hajir for his guidance and insights shared over the course of many conversations. The author also thanks Jeff Hatley for proofreading this work, as well as the anonymous referees for their extensive comments and for bringing the rich history of permutation polynomials to the attention of the author.

\bibliographystyle{plain}
\bibliography{ChebyshevActionV2}
\end{document}